\DeclareMathOperator{\End}{End}
\DeclareMathOperator{\Hom}{Hom}
\DeclareMathOperator{\tr}{tr}
\DeclareMathOperator{\id}{Id}
\def\cF{\mathscr{F}}
\def\cH{\mathscr{H}}
\def\cJ{\mathscr{J}}
\def\cO{\mathscr{O}}
\def\fg{\mathfrak{g}}
\newtheoremstyle{exps}{\topsep}{\topsep}{}{0pt}{\bfseries}{.}{0pt}{}
\newtheorem*{thm*}{Theorem}
\newtheorem*{prop*}{Proposition}
\newtheorem{ques*}{Question}
\newtheorem*{lem*}{Lemma}
\newtheorem*{conj*}{Conjecture}
\newtheorem*{cor*}{Corollary}
\newtheorem*{notn*}{Notation}
\newtheorem*{rem*}{Remark}
\newtheorem{thm}[subsection]{Theorem}
\newtheorem{ques}[subsection]{Question}
\theoremstyle{definition}
\newtheorem*{defn*}{Definition}
\newtheorem*{exer*}{Exercise}
\newtheorem*{ex*}{Example}
\newtheorem{defn}[subsection]{Definition}
\newtheorem*{problem*}{Problem}
\newtheorem{rem}[subsection]{Remark}
\newtheorem{nolabel}[subsection]{}
\theoremstyle{exps}
\numberwithin{equation}{subsection}
\def\vac{\mathbb{1}}
\def\ds{\slashed{D}}
\def\cJ{\mathcal{J}}
\def\fn{\mathfrak{n}}
\def\fh{\mathfrak{h}}
\DeclareMathOperator{\ch}{ch}
\begin{document}
\title{Recent advances and open questions on the susy structure of the chiral de Rham Complex}
\author{Reimundo Heluani \thanks{IMPA, Rio de Janeiro}}
\date{}
\maketitle
\section{Introduction} \label{sec:intro}
\begin{nolabel}\label{no:sheaf}
To each smooth $n$-manifold $M$ one can attach \cite{malikov} a sheaf of super vertex algebras $\Omega_M^{ch}$, called the \emph{chiral de Rham complex of $M$.}. Locally, it consists of $n$ copies of the $bc-\beta\gamma$ system
\[ b_i(z) \cdot c^j(w) \sim \frac{\delta_{i}^j}{z-w}, \qquad \beta_i(z) \cdot \gamma^j(w) \sim \frac{\delta_i^j}{z-w}, \qquad 1 \leq  i,j \leq n, \]
where $b_i, c^i$ are odd fields and $\beta_i, \gamma^i$ are even. 
This vertex algebra carries an odd derivation $\ds$ such that $\ds^2$ equals the translation operator. Locally it is given by
\[ \ds \gamma^i = c^i, \qquad \ds b_i = \beta_i. \]

If $n=2m$ and $M$ is a holomorphic manifold, there exists a holomorphic version $\Omega_M^{ch,hol}$. Locally one has 
\[ \Omega_M^{ch} \simeq \Omega_M^{ch,hol} \otimes \overline{\Omega}_M^{ch,hol}, \]
where $\overline{\Omega}_M^{ch,hol}$ is another copy of the holomorphic sheaf (the anti-holomorphic sector). We will call the corresponding embeddings $\Omega_M^{ch,hol} \hookrightarrow \Omega_M^{ch}$ and $\overline{\Omega}_M^{ch,hol} \hookrightarrow \Omega^{ch}_M$ the \emph{naive embeddings}.

The sheaf $\Omega_M^{ch}$ (resp. $\Omega_M^{ch,hol}$)  depends only on the differentiable (resp. holomorphic) structure of $M$. In particular they do not depend on any metric structure on $M$. 
\end{nolabel}
\begin{nolabel}
When $M$ has special holonomy $\Omega^{ch}_M$ admits two commuting embeddings of certain superconformal extensions of the $N=1$ or Neveu-Schwarz algebra of central charge $c = \tfrac{3n}{2}$. 

A particular case is when $n=2m$ and $M$ is a Calabi-Yau $m$-fold, that is, $M$ has holonomy $SU(m)$. In this case the corresponding supersymmetric extension is the $N=2$ superconformal algebra\footnote{Actually the supersymmetry algebra is an extension by two fields of conformal weight $m/2$ studied by Odake \cite{odake}. We will restrict our attention to the $N=2$ subalgebra.}. There are two ways of understanding the two commuting embeddings of this $N=2$ vertex algebra. Both of these rely in identifying the local generating sections of $\Omega_M^{ch}$ with geometric tensors on $M$. 

For a local coordinate system $\left\{ x^i \right\}_{i=1}^n$ on $M$, we may identify (see \eqref{eq:change1}-\eqref{eq:change2} for a precise statement) 
\begin{equation} \label{eq:assign1}  b_i \leftrightarrow \frac{\partial}{\partial x^i}, \qquad c^i \leftrightarrow dx^i, \qquad \gamma^i \leftrightarrow x^i. \end{equation}
We obtain in this manner embeddings of sheaves of vector spaces
\begin{equation}\label{eq:embedding1-intro} TM \hookrightarrow \Omega_M^{ch} \hookleftarrow T^*M. \end{equation}
To the identity endomorphism of $TM$, locally written as $\sum dx^i \cdot \partial_{x^i}$ one may associate the local section 
\[ J = \sum_{i=1}^n c^i b_i, \]
of $\Omega_M^{ch}$. It turns out that when $M$ is orientable these local expressions glue to give a globally defined section $J \in C^{\infty}(M, \Omega_M^{ch})$. This section $J$ and its \emph{superpartner} $\ds J$ generate one copy of the $N=2$ vertex algebra of central charge $c=3n$. 

If $n=2m$ and $M$ is holomorphic, the above embeddings \eqref{eq:embedding1-intro} are compatible with the complex structures, in the sense that locally, on the coordinate chart with holomorphic coordinates $\left\{ x^\alpha \right\}_{\alpha=1}^m$ and anti-holomorphic coordinates $\left\{ x^{\bar\alpha} \right\}$ 
\[ T_{1,0}M \oplus T_{0,1} M \hookrightarrow \Omega^{ch} \simeq \Omega^{ch,hol} \otimes \overline{\Omega}^{ch,hol} \hookleftarrow T^*_{1,0}M \oplus T^*_{0,1}M. \]
These embeddings generate the naive embeddings mentioned in \ref{no:sheaf}. 

The identity endomorphism of the holomorphic tangent bundle of $M$ and the identity endomorphism of the anti-holomorphic tangent bundle of $M$ give rise to the local sections 
\[ J^{hol} = \sum_\alpha c^\alpha b_\alpha, \qquad \overline{J}^{hol} = \sum_{\bar\alpha} c^{\bar\alpha} b_{\bar\alpha},\]
of $\Omega^{ch,hol}$ and $\overline{\Omega}^{ch,hol}$ respectively. When $M$ is Calabi-Yau, that is $M$ is \emph{holomorphically orientable}, then each of these sections and their superpartners $\ds J^{hol}$, $\ds \overline{J}^{hol}$ produce global sections of $\Omega_{M}^{ch,hol}$ and $\overline{\Omega}_M^{ch, hol}$. Under the naive embeddings these generate the two commuting copies of $N=2$. 
\end{nolabel}
\begin{nolabel}\label{no:non-naive}
A disadvantage of the above approach is that it relies on special coordinates (holomorphic coordinates) to obtain the commuting sectors on $\Omega_M^{ch}$. In particular this will only work for holomorphic manifolds. If we want to obtain two commuting superconformal structures on more general manifolds then we need to identify $TM$ and $T^*M$ in a different way inside of $\Omega_M^{ch}$. This will require a metric on $M$. Let $(M, g)$ be a Riemannian manifold\footnote{Any signature will work in fact.}. If we denote by $g_{ij}$ and $g^{ij}$ the coordinate components of the metric and its inverse, then instead of 
using \eqref{eq:assign1} we  may identify
\begin{equation} \label{eq:assign2} b_i \pm \sum_{i} g_{ij} c^j \leftrightarrow \frac{\partial}{\partial x^i}, \qquad c^i \pm \sum_i g^{ij} b_j \leftrightarrow dx^i, \end{equation}
we obtain now two different embeddings like in \eqref{eq:embedding1-intro}. 

If $n=2m$ and $M$ is a holomorphic manifold, and $g$ is a Kähler metric, the identity endomorphism of the holomorphic tangent bundle now gives rise to two different local sections 
\[ J^\pm = \sum_\alpha \left(c^\alpha \pm \sum_{\bar\beta} g^{\alpha \bar\beta} b_{\bar\beta} \right) \cdot \left( b_\alpha \pm \sum_{\bar\beta} g_{\alpha \bar\beta} c^{\bar\beta} \right). \]
It turns out \cite{heluani8} that when $(M,g)$ is Ricci-Flat these two sections are globally defined and together with their superpartners $\ds J^\pm$ they generate two commuting copies of the $N=2$ superconformal algebra of central charge $c=\frac{3n}{2}$. We would have obtained the same two sections had we started with the identity endomorphism of the anti-holomorphic tangent bundle. 
\end{nolabel}
\begin{nolabel}\label{no:general-holonomy}
One advantage of the approach described above is that this can be applied to any manifold with a metric, without the need of having special coordinate systems. This in particular makes it possible to study supersymmetry in other special holonomy cases. Using the embedding \eqref{eq:assign2} instead of the naive embedding, we obtain two global sections of $\Omega_M^{ch}$ for each globally defined differential form on $M$. When we consider only those that are paralell with respect to the Levi-Civita connection they generate the above mentioned supersymmetric extensions of $N=1$. The known results and the open questions are reviewed in detail in section \ref{sec:susy-cases}. 
\end{nolabel}

\begin{nolabel}\label{no:modular}
Interest in the supersymmetry of $\Omega_M^{ch}$ has been renewed due to the result in \cite{eguchi} connecting the elliptic genus of a $K3$ surface with Mathieu's $M_{24}$ group. The authors decompose the elliptic genus $Z(\tau, \alpha)$ of a $K3$ surface as linear combination of irreducible characters of the $N=4$ superconformal algebra of central charge $c=6$ and found that the generating series of multiplicities is a mock modular form whose coefficients in its $q$-expansion are dimensions of representations of $M_{24}$. 

Since the elliptic genus of a Calabi-Yau manifold is the graded dimension of the cohomology of the chiral de Rham complex, it is natural to ask if one can apply the same procedure to other special holonomy manifolds, decomposing $\Omega_{M}^{ch}$ (resp. its graded dimension) as sums of irreducible modules of the corresponding supersymmetry algebra (resp. its irreducible characters). 

An advantage of the above described approach of working with the $C^\infty$ version of $\Omega_M^{ch}$ is that it makes it possible to formalize expressions for the elliptic genus as
\[ Z(\tau, \alpha) = \tr_{C^{\infty}(M, \Omega_M^{ch})} (-1)^F y^{J_0^+} q^{L_0^+ - c/24} \bar{y}^{J_0^-} \bar{q}^{L^-_0 - c/24}, \]
appearing in the physics literature. A disadvantage is that the spaces of global smooth sections, even of a prescribed conformal weight and charge, are infinite dimensional. One can correct this by taking a BRST cohomology with respect to the ``minus'' or ``left-moving'' sector, to obtain a superconformal vertex algebra with convergent character. From this perspective, we view $\Omega_{M}^{ch}$ as a Dolbeaut resolution of $\Omega_M^{ch,hol}$. This is known as a topological twist and we discuss below the proposal in \cite{boer} for $G_2$ manifolds applied in the context of the chiral de Rham complex of $M$. We also discuss this topological twist in the case of $Spin_7$ manifolds, as a possible way to rigorously analyze the results of \cite{benjamin-harrison} from the point of view of $\Omega_M^{ch}$. 
\end{nolabel}
\begin{nolabel}\label{no:mock}
The elliptic genus $Z(\tau)$ of complex manifolds $M$, or (conjecturally) more generally the graded dimension of the cohomology of $\Omega_M^{ch}$ has modular properties. Knowing that in the special holonomy cases we obtain explicit subalgebras acting in $\Omega_M^{ch}$ it is natural to try to express $\Omega_M^{ch}$ as a sum of irreducible modules over these subalgebras, os simply expand $Z(\tau)$ as linear combination of irreducible characters of these subalgebras. The generating series of the multiplicity spaces is expected to have mock modular properties. 

In section \ref{sec:hamiltonian} below we discuss this situation from the point of view of Hamiltonian reduction. It turns out that all of the supersymmetric extensions of $N=1$ appearing inside of $\Omega_M^{ch}$ for special holonomy manifolds are quantum Hamiltonian reductions of affine Lie superalgebras at either the \emph{superprincipal} or minimal nilpotent (see \ref{sec:hamiltonian}). Our understanding of the representation theory of these algebras has advanced dramatically in these last few years, including the mock-modularity of their irreducible representations (see \cite{kacwakimoto-superconformal} and references therein). These new techniques can be applied to obtain the conjectural characters of \cite{benjamin-harrison} in the $Spin_7$ case and perhaps apply the same techniques to assign a mock modular form to each $G_2$ manifold. The connection between the quantum Hamiltonian reduction and the chiral de Rham complex $\Omega_M^{ch}$ remains a mystery to this day. 
\end{nolabel}
\begin{nolabel}\label{no:purpose}
The purpose of this article is to clarify the different embeddings of the superconformal algebras in the chiral de Rham complex $\Omega_M^{ch}$ available in the literature. We explicitly discuss the differences between the holomorphic--anti-holomorphic sectors vs the left--right sectors. This boils down to interpreting the chiral de Rham complex locally as a free ghost system vs a free Boson-Fermion system (see \ref{no:boson-fermion-vs-ghosts}). We describe the known results and conjectures regarding the existence of these supersymmetric algebras. 

We leave several open questions. All of them towards applying the following guideline. 1) construct two commuting superconformal structures on the $C^\infty$ chiral de Rham complex. 2) Take global sections (as opposed to sheaf cohomology) and then BRST cohomology of the left moving sector. 3) Decompose this cohomology with respect to the right moving superconformal structure. 
\end{nolabel}

\section{Vertex algebras} \label{sec:examples}
In this section we collect some notation and examples of vertex algebras with  $N=1$ supersymmetric structure. These algebras where introduced by Barron in \cite{barron1,barron2,barron3,barron4}, studied by Kac in \cite{kac:vertex} and by Kac and the author in \cite{heluani3}. To keep the notation simple we will not make use of the superfield formalism and instead treat these algebras as ordinary vertex algebras endowed with an extra odd derivation. 

We will avoid the prefix \emph{super} whenever possible so unless otherwise noticed, all of our vertex algebras are indeed super vertex algebras. For two homogeneous elements $a, b$ in a vector (super) space $V$ we will denote by $(-1)^{ab}$ the number $-1$ if both $a$ and $b$ are odd, $+1$ otherwise. 

This is not meant to be an introduction to vertex algebras and we assume that the reader is familiar with the literature on the subject as very good textbooks are available. We here just collect notation in order to quickly introduce the main examples that will be used below. 

\begin{nolabel}\label{no:fields}
For a vector space $V$ we denote 
\[ \cF(V) = \Hom \left( V, V( (z)) \right), \]
and call its elements \emph{fields on $V$}. For $a(z) \in \cF(V)$ we will write 
\[ a(z) = \sum_{n \in \mathbb{Z}} z^{-1-n} a_{(n)}, \] 
where $a_{(n)} \in \End(V)$ satisfy that for each $b \in V$ there exists an $m \in \mathbb{Z}$ such that $a_{(n)}b = 0$ for all $n \geq m$.
\end{nolabel}
\begin{nolabel}\label{no:vertex}
Recall that a vertex algebra consists of a vector space $V$ together with an even vector $\vac \in V$, an even endomorphism $\partial \in \End(V)$ and a bilinear map \[ V \otimes V \rightarrow V ( (z)), \qquad a \otimes b \mapsto a(z)b = \sum_{n \in \mathbb{Z}} z^{-1-n} a_{(n)}b. \] 
Dualizing the second factor in this map this is equivalent to giving a map 
\[ V \mapsto \cF(V), \qquad a \mapsto a(z) = \sum_{n \in \mathbb{Z}} z^{-1-n} a_{(n)}. \] 
 We will call $a(z)$ the field associated to $a$.
This data is subject to the following axioms:
\begin{itemize}
\item $\vac(z) a = a$, $a(z) \vac = e^{z \partial} a$. 
\item $[\partial, a(z)] = \partial_z a(z)$. 
\item $(z-w)^n a(z) b(w) = (-1)^{ab} (z-w)^n b(w) a(z)$, for all $a, b \in V$ and $n \gg 0$. 
\end{itemize}
\end{nolabel}
\begin{nolabel}\label{no:OPE}
Given two vectors on $a,b$ on a vertex algebra $V$ we will denote
\[ a(z) \cdot b(w) \sim \sum_{j \geq 0} \frac{(a_{(j)}b) (w) }{(z-w)^{j+1}}, \]
and call this the OPE of $a$ and $b$. Note that the sum on the RHS is finite. If $a_{(j)}b = \alpha \vac$ for some $\alpha \in \mathbb{C}$ we will omit $\vac(w)$ in the above notation and implicitly write $\alpha$ for $\alpha \id_{V}$. 

Similarly we will denote by $a \cdot b = a_{(-1)}b$. This is called the \emph{normally ordered product} of $a$ and $b$. It is in general neither a commutative nor an associative bilinear structure. It may happen however that for three specific vectors $a,b,c \in V$ we may have $(a\cdot b) \cdot c = a \cdot (b \cdot c)$. In these cases we will not write the parenthesis. 
\end{nolabel}
\begin{nolabel}\label{no:generators}
Let $V$ be a vertex algebra and let $S \subset V$ be an ordered finite set of vectors in $V$ satisfying 
\begin{equation} \label{eq:ordered1} V = \text{span} \left\{ a^1_{-n_1} a^2_{-n_2} \cdots a^k_{-n_k}\vac, S \ni a^i \leq a^{i+1}, \quad  a^i = a^{i+1} \Rightarrow n_i \geq n_{i+1}, \quad n_i \geq 1 \right\} \end{equation}

We will say that $S$ generates $V$. Since all operations are bilinear we may replace $S$ by a finite dimensional vector space of $V$. 

If a vertex algebra has a finite set of generators $S$ then the OPE of any two vectors from $V$ can be uniquely determined from the OPEs of the vectors from $S$. Below we will list examples of vertex algebras admitting a finite set of generators. For this we simply say what the generators are, and write their OPE. Often times we omit the ordering in $S$ implicitly assuming that any ordering leads to a generating set.  

The notion of generators, \emph{strong generators}, and PBW-like theorems can be formalized and generalized in detail. We point the readers to the classical literature on the subject. When the OPEs or positive products $a_{(j)}b$ for vectors $a,b \in S$ and $j \geq 0$ is a linear combination of vectors in $S$ or some derivative of them (that is $k=1$ above) the $\mathbb{C}[\partial]$-submodule of $V$ generated by $S$ is called a \emph{Lie conformal algebra}, a \emph{vertex Lie algebra} or a $Lie^*$ algebra. 
\end{nolabel}
\begin{nolabel}\label{no:replace-partial}
Notice that we can replace \eqref{eq:ordered1} by 
\[ \label{eq:ordered2} V = \text{span} \left\{ (\partial^{n_1} a^1) \Bigl(  (\partial^{n_2} a^2) \Bigl( \cdots (\partial^{n_k} a^k)\Bigr) \cdots  \Bigr), S \ni a^i \leq a^{i+1}, \quad  a^i = a^{i+1} \Rightarrow n_i \geq n_{i+1}, \quad n_i \geq 1 \right\}.\]
\end{nolabel}
\begin{nolabel}\label{no:vg}
Let $\fg$ be a finite dimensional Lie algebra with an invariant non-degenerate bilinear form $\langle, \rangle$. The affine Kac-Moody vertex algebra is generated by $\fg$ with the OPE given by 
\[ a(z) \cdot b(w) \sim \frac{[a,b](w)}{z-w} + \frac{\langle a,b\rangle}{(z-w)^2}. \]
When $\fg$ is simple, it is customary to write $\langle, \rangle_0$ for the invariant bilinear form such that the longest root $\theta$ satisfies $\langle \theta,\theta \rangle_0 = 2$, and when $\langle, \rangle = k \langle, \rangle_0$ the corresponding vertex algebra is called the \emph{affine Kac-Moody} vertex algebra of level $k$. As a vector space it is given by the Verma $\hat{\fg}$-module $V(k \Lambda_0)$. 
\end{nolabel}
\begin{nolabel}\label{no:free-fermions}
Let $V$ be a vector (super) space with a (super) symmetric bilinear form $(,)$. The algebra of \emph{free Fermions} $F(V)$ is generated by vectors $v \in V$ with reversed parity and OPEs
\[ v(z) \cdot v'(w) \sim \frac{(v,v')}{(z-w)}. \] 
\end{nolabel}
\begin{nolabel}\label{no:symplectic-bosons}
Let $V$ be a vector (super) space with a (super) symplectic bilinear form $(,)$. The algebra of \emph{symplectic Bosons} is generated by vectors $v \in V$ (same parity) and OPEs
\[ v(z) \cdot v'(w) \sim \frac{(v,v')}{(z-w)}. \] 
\end{nolabel}
\begin{nolabel}\label{no:Virasoro}
The Virasoro vertex algebra of central charge $c$ is generated by one vector $L$ with OPE
\begin{equation} \label{eq:virasoro1}  L(z)\cdot L(w) \sim \frac{\partial_w L(w)}{(z-w)} + \frac{2 L(w)}{(z-w)^2} + \frac{c/2}{(z-w)^4}. \end{equation}

A vertex algebra $V$ is called conformal if it has a vector $L \in V$ such that $L(z)$ satisfies \eqref{eq:virasoro1} and moreover expanding 
\begin{equation} \label{eq:virasoro2}  L(z) = \sum_{n \in \mathbb{Z}}z^{-2-n} L_n, \end{equation}
we have $L_{-1} = \partial$ and $L_0$ is diagonalizable with spectrum bounded below (we will work with spectrum being non-negative half-integer or integer in this article). The number $c$ will be called the central charge of the vertex algebra $V$. 

Every conformal vertex algebra is a representation of the Virasoro Lie algebra of central charge $c$. Primitive vectors of this representations are called \emph{primary} vectors and their eigenvalues for $L_0$ are called their conformal weight. That is $a \in V$ is a primary vector of conformal weight $\Delta$ if 
\begin{equation} \label{eq:primary} L(z) \cdot a(w) \sim \frac{\partial_w a(w)}{(z-w)} + \frac{\Delta a(w)}{(z-w)^2}. \end{equation} 
If there are higher order poles in the OPE the vector $a$ is said to have conformal weight $\Delta$ but it is not primary. 
\end{nolabel}
\begin{nolabel}\label{no:n=1}
The Neveu-Schwarz or $N=1$ vertex algebra of central charge $c$ is a super extension of the Virasoro vertex algebra of the same central charge. With a vector $L$ as in \ref{no:Virasoro} and another odd generator $G$, primary of conformal weight $3/2$. The remaining OPE is given by
\begin{equation} \label{eq:n=1} G(z) \cdot G(w) \sim \frac{2L(w)}{z-w} + \frac{c}{(z-w)^2}. \end{equation}
If we expand 
\begin{equation} \label{eq:g-expansion}
G(z) = \sum_{n \in \mathbb{Z}} G_{(n)}z^{-1-n} = \sum_{n \in 1/2 + \mathbb{Z}} G_{n} z^{-3/2 - n}, 
\end{equation} 
Then the operator 
\begin{equation} \label{eq:ds-def} \ds := G_{(0)} = G_{-1/2}, \end{equation} is an odd endomorphism of $V$ satisfying 
\begin{equation} \label{eq:ds2} \ds^2 = \partial. \end{equation} 

We will say that a vertex algebra is an $N=1$ superconformal vertex algebra of central charge $c$ if it is conformal of central charge $c$ and it has an odd vector $G \in V$, primary of conformal weight $3/2$ such that $G(z)$ satisfies \eqref{eq:n=1} and moreover defining $\ds$ by \eqref{eq:ds-def} we have 
\begin{equation} \label{eq:super-translation}
[\ds, a(z)] = (\ds a)(z). 
\end{equation} 
A vector $a \in V$ will be called primary of conformal weight $\Delta$ if it has conformal weight $\Delta$ and it is a primitive vector for the representation of the $N=1$ superconformal Lie algebra. That is in addition of \eqref{eq:primary} we require
\begin{equation} \label{eq:primary2}
G(z) \cdot a(w) \sim \frac{(\ds a )(w)}{(z-w)}.
\end{equation}
\end{nolabel}
\begin{defn}
A supersymmetric vertex algebra consist of a vertex algebra $V$ together with an odd endomorphism $\ds$ satisfying $\ds \vac = 0$, \eqref{eq:ds2} and \eqref{eq:super-translation}. For a supersymmetric vertex algebra $V$ we will call $\ds a$ the \emph{superpartner} of $a$. 

In particular, any $N=1$ superconformal vertex algebra is a supersymmetric vertex algebra. The converse is not true however. 
\label{defn:susy1}
\end{defn}
\begin{nolabel}\label{no:susy-generators}
Given a supersymmetric vertex algebra $V$ we will say that a finite ordered set $S \subset V$ generates $V$ if 
\[ \label{eq:ordered3} V = \text{span} \left\{ (\ds^{n_1} a^1) \Bigl(  (\ds^{n_2} a^2) \Bigl( \cdots (\ds^{n_k} a^k)\Bigr) \cdots  \Bigr), S \ni a^i \leq a^{i+1}, \quad  a^i = a^{i+1} \Rightarrow n_i \geq n_{i+1}, \quad n_i \geq 1 \right\}.\]
Unfortunately, even when a supersymmetric vertex algebra admits a set of generators, it is not enough to know the OPEs between the generators, we also need to know the OPE of these generators with their superpartners\footnote{This is resolved by the use of superfields and their super OPE or Lambda-bracket as in \cite{heluani3}.}.
\end{nolabel}
\begin{nolabel}\label{no:n=1-generated}
The $N=1$ vertex algebra of \ref{no:n=1} is generated, as a supersymmetric vertex algebra by the odd vector $G$. Its superpartner is $2 L = \ds G$. 
\end{nolabel}
\begin{nolabel}\label{no:vgsuper}
Let $\fg$ be a finite dimensional Lie algebra with a non-degenerate invariant bilinear form $\langle, \rangle$. We have a supersymmetric vertex algebra $V(\fg_{super})$ which is generated by vectors $\bar{a} \in \fg$ with reversed parity and their superpartners $a:=\ds \bar{a}$, with OPEs given by
\[ \bar{a}(z) \cdot \bar{b}(w) \sim \frac{\langle a,b \rangle}{z-w}, \qquad \ds \bar{a}(z) \cdot \bar{b}(w) \sim \frac{\overline{[a,b]}(w)}{z-w}. \]
It follows that the OPE between the fields $a := \ds \bar{a}$ and  $b:= \ds \bar{b}$ is given by 
\[ a(z) \cdot b(w) \sim \frac{[a,b]}{z-w}+ \frac{\langle a,b \rangle}{(z-w)^2}. \]
This vertex algebra is the same as the equally named $V(\fg_{super})$ in \cite{kac:vertex}. When $\fg$ is simple, normalizing the form $\langle, \rangle = k \langle,\rangle_0$ as in \ref{no:vg} with $k \neq -h^\vee$, that is $k$ is not critical, this algebra is $N=1$ superconformal by the Kac-Todorov construction \cite{kactodorov}. Notice that even at the critical level this is a supersymmetric vertex algebra. As such it is simply generated by the vectors $\bar{a} \in \fg$. When $k \neq -h^\vee$ these vectors are primary of conformal weight $1/2$. 
\end{nolabel}
\begin{nolabel}\label{no:n=2}
The $N=2$ vertex algebra is a superconformal vertex algebra of central charge $c$ generated by an even vector $J$, primary of conformal weight $1$, in addition to the odd $N=1$ vector $G$ as in \ref{no:n=1}. The remaining OPEs are 
\begin{equation} \label{eq:n=2-j-ope} J(z)\cdot J(w) \sim \frac{c/3}{(z-w)^2}, \qquad (\ds J)(z) \cdot J(w) \sim \frac{G(w)}{z-w}. \end{equation} 

As an ordinary vertex algebra, it has $4$ generators, $G$, primary of conformal weight $3/2$, its superpartner $L = \tfrac{1}{2} \ds G$ which is a Virasoro vector hence of conformal weight $2$,  $J$ of conformal weight $1$ and its superpartner $\ds J$, primary of conformal weight $3/2$. It is customary to define
\[ G = G^+ + G^-, \qquad \ds J = G^- - G^+. \]
The vectors $G^\pm$ are primary of conformal weight $3/2$ with respect to the Virasoro element $L$ and have charge $\pm 1$ with respect to the $U(1)$ current $J$, that is 
\[ J(z) \cdot G^\pm(w) \sim \pm \frac{G^\pm(w)}{z-w}. \]
Decomposing 
\begin{equation} \label{eq:j-decomp}
J(z) = \sum_{n \in \mathbb{Z}} z^{-1-n} J_n,
\end{equation}
its zero mode $J_0$ is a semisimple endomorphism of the vertex algebra $V$ with integer eigenvalues. An eigenvector for $J_0$ with eigenvalue $m$ will be called a vector of \emph{charge} $m$, thus, $G^\pm$ has charge $\pm 1$.  

The $N=2$ supersymmetric algebra has an automorphism $\sigma$ preserving the supersymmetric structure. It leaves invariant $G$ and sends $J$ to $-J$. As a usual vertex algebra, this automorphism exchanges therefore $G^+ \leftrightarrow G^-$. 
\end{nolabel}
\begin{nolabel}\label{no:topological-twist1}
The $N=2$ vertex algebra admits another set of generators as a vertex algebra but not as a supersymmetric vertex algebra, in such a way that the Virasoro vector has central charge $0$. This is usually called a \emph{topological twist}. Indeed the vector $T := L + \frac{1}{2} \partial J$ is a Virasoro field of central charge $0$. With respect to this Virasoro element, $J$ is still of conformal weight $1$ but it is no longer primary. The two vectors $G^\pm$ are still primary vectors but their conformal weight changes. It is customary to write $Q := G^+$ and $H=G^-$, then $Q$ has conformal weight $1$ and charge $+1$ and $H$ has conformal weight $2$ and charge $-1$, they are expanded as 
\begin{equation} \label{eq:n=2-newgen}
Q(z) = \sum_{n \in \mathbb{Z}} z^{-1-n} Q_n, \qquad H(z) = \sum_{n \in \mathbb{Z}} z^{-2-n} H_n. 
\end{equation}
These endomorphisms together with \eqref{eq:virasoro2} and \eqref{eq:j-decomp} form a $\mathbb{Z}$-graded Lie superalgebra. Its degree zero part is spanned by $L_0, J_0, Q_0$ and $H_0$ is isomorphic to $\fg\mathfrak{l}(1|1)$. In particular $Q_0^2 = 0$, it commutes with $L_0$, hence it preserves conformal weight and it increases charge by $1$. 
\end{nolabel}
\begin{nolabel}\label{no:cohomology}
Let $V$ be a vertex algebra together with four vectors $T, J, Q$ and $H$ as in \ref{no:topological-twist1}, such that the eigenvalues of $T_0$ and $J_0$ are integer. Then $V$ has two different gradings, one by conformal weight (the eigenvalues of $T_0$) and another one by charge (eigenvalues of $J_0$). We will denote by $V^\bullet$ the graded vector space with respect to the charge grading. The endomorphism $Q_0: V^\bullet \rightarrow V^{\bullet + 1}$ makes $V$ into a complex. The endomorphism $T_0$ preserves degrees and commutes with $Q_0$ hence it is a map of complexes. Finally the endomorphism $H_0: V^\bullet \rightarrow V^{\bullet -1}$ satisfies $[Q_0, H_0] = T_0$, hence it produces an homotopy from the map $T_0$ to zero. It follows that the cohomology $H^\bullet(V, Q_0)$ is concentrated in the sub-complex of conformal weight zero. 
\end{nolabel}
\begin{nolabel}\label{no:n=4}
The $N=4$ vertex algebra is a superconformal vertex algebra that has in addition to the generator $G$ of $N=1$ with central charge $c$, a $\mathfrak{s}\mathfrak{u}_2$ worth of $N=2$, structures, that is we have three even vectors $J_1, J_2, J_3$ each one of primary of conformal weight $1$ with respect to the superconformal structure determined by $G$ and satisfying \eqref{eq:n=2-j-ope}. The remaining OPEs are given by
\begin{equation} \label{eq:n=4-ope} J_i(z) \cdot J_j(w) \sim 2 \varepsilon_{ijk} \frac{J_k(w)}{z-w}, \qquad (\ds J_i)(z) \cdot J_j(w) \sim \varepsilon_{ijk} \frac{(\ds J_k)(w)}{z-w}, \qquad i \neq j\end{equation} 
where $\varepsilon_{ijk}$ is the totally antisymmetric tensor. 

As a usual vertex algebra this algebra has four even generators: the three currents $J_i$ forming an affine $\mathfrak{s}\mathfrak{l}_2$ Kac-Moody vertex algebra, the superpartner of $G$, $L= \tfrac{1}{2} \ds G$ which gives a Virasoro vector. With respect to this vector all the currents $J_i$ are primary of conformal weight $1$. The three super-partners $\ds J_i$ together with $G$ are the four odd generators, they are all primary of conformal weight $3/2$ with respect to $L$ and they form a $\mathbb{C}^2 \oplus \mathbb{C}^{2*}$ representation of $\mathfrak{sl}_2$ with respect to the three currents $J_i$. The OPE between these odd generators can be found in \cite[5.9.7b)]{kac:vertex} for example, we will not need them since they are implied  by applying $\ds$ to the second ope in \eqref{eq:n=4-ope}
\end{nolabel}
\begin{nolabel}\label{no:Spin7}
The Shatashvili-Vafa vertex algebra $SVSpin_7$ associated to $Spin_7$ introduced in \cite{vafa} is the superconformal vertex algebra generated by an $N=1$ vector $G$ of central charge $12$ and an even field $X$ of conformal weight $2$ which is non-primary. The OPEs are (here $L = \tfrac{1}{2} \ds G$ is the Virasoro element)
\begin{equation} \label{eq:svspin7}
\begin{aligned}
L(z) \cdot X(w) &\sim \frac{\partial_w X(w)}{z-w} + \frac{2 X(w)}{(x-w)^2} + \frac{2}{(z-w)^4}, \\ 
G(z) \cdot X(w) &\sim \frac{(\ds X)(w)}{z-w} + \frac{1}{2} \frac{G(w)}{(z-w)^2}, \\ 
X(z) \cdot X(w) &\sim \frac{8 \partial_w X(w)}{(z-w)} + \frac{16 X(w)}{(z-w)^2} + \frac{16}{(z-w)^4} \\
(\ds X)(z) X(w) &\sim \frac{5}{2} \frac{\partial_w (\ds X)(w)}{z-w} + \frac{5}{4} \frac{\partial_w^2 G(w)}{z-w} + 6 \frac{(G\cdot X)(w)}{(z-w)} + \\ 
& \quad 8 \frac{\partial_w X(w)}{(z-w)^2} + \frac{15}{4} \frac{\partial_w G(w)}{(z-w)^2} + \frac{15}{2} \frac{G(w)}{(z-w)^3}
\end{aligned}
\end{equation} 
Notice that these OPEs are non-linear in the generators due to the appearance of the product $(G \cdot X)$ in the last OPE. 

$X$ generates another copy of the Virasoro algebra of central charge $1/2$ inside of $SVSPin_7$. 
\end{nolabel}
\begin{nolabel}\label{no:G2}
The Shatashvili-Vafa vertex algebra $SVG_2$ associated to $G_2$ is the superconformal vertex algebra introduced in \cite{vafa} and generated by an $N=1$ vector $G$ of central charge $c = 21/2$, an odd vector $\Phi$ of conformal weight $3/2$ and an even vector $X$ of conformal weight $2$. Their superpartners are $L = \tfrac{1}{2} \ds G$ which is a Virasoro vector (hence conformal weight $2$), $K := \ds \Phi$ of conformal weight $2$ and $M:=\ds X$ of conformal weight $5/2$. $\Phi$ is primary with respect to the full $N=1$ structure but $X$ is not. 
The remaining OPEs are (all other ones can be deduced by supersymmetry applying $\ds$ to these ones)
\begin{equation}\label{eq:svg2}
\begin{aligned}
\Phi(z) \cdot \Phi(w) &\sim \frac{6 X(w)}{z-w} - \frac{7}{(z-w)^3}, \\
\Phi(z) \cdot K(w) &\sim -\frac{3}{2} \frac{2M(w) + \partial_w G(w)}{z-w} - \frac{3 G(w)}{(z-w)^2} \\
G(z) \cdot X(w) &\sim \frac{M(w)}{z-w} - \frac{1}{2} \frac{G(w)}{(z-w)^2}, \\ 
L(z) \cdot X(w) &\sim \frac{\partial_w X(w)}{z-w} + \frac{2 X(w)}{(z-w)^2} - \frac{7}{4} \frac{1}{(z-w)^4}, \\ 
\Phi(z) \cdot X(w) &\sim - \frac{5}{2} \frac{\partial_w \Phi(w)}{z-w} - \frac{15}{2} \frac{\Phi(w)}{(z-w)^2}, \\
\Phi(z) \cdot M(w) &\sim \frac{5}{2} \frac{ \partial_w K(w) - 6(G\cdot \Phi)(w)}{z-w} - \frac{9}{2} \frac{K(w)}{(z-w)^2}, \\
X(z) \cdot X(w) &\sim -5\frac{\partial_w X(w)}{z-w} - 10 \frac{X(w)}{(z-w)^2} + \frac{35}{4} \frac{c}{(z-w)^4}, \\ 
X(z) \cdot M(w) &\sim \frac{4 (G \cdot X)(x)}{z-w} - \frac{7}{2} \frac{\partial_w M(w)}{z-w} - \frac{3}{4} \frac{\partial^2_w G(w)}{z-w} - \\ 
& \quad \frac{5M(w)}{(z-w)^2} - \frac{9}{4} \frac{\partial_w G(w)}{(z-w)^2} - \frac{9}{2} \frac{G(w)}{(z-w)^3}. 
\end{aligned}
\end{equation}
This is another example of a non-linearly generated supersymmetric vertex algebra. 

The pair $\Phi$, $X$ generates another $N=1$ structure of central charge $7/10$ which does not preserve (commute) with the $N=1$ structure generated by $G, L$. 
\end{nolabel}
\begin{rem} In all the examples of superconformal algebras that we have listed above, that is $N=2$, $N=4$, $Spin_7$ and $G_2$, the corresponding algebras are determined by the lowest conformal weight vectors and their superpartner if we where to allow positive products in \eqref{eq:ordered1}. That is, all the other generators are obtained from the OPE of these lowest conformal weight generators. Indeed, in the case of $N=2$ as in \ref{no:n=2} the generator $G$ can be defined by the RHS of \eqref{eq:n=2-j-ope}. The same is true for $N=4$ where knowing simply $J_i$ (actually only two of them will suffice) we obtain $G$ from the same equation \eqref{eq:n=2-j-ope}. The case of $SVSpin_7$ is generated simply by $X$ as we can read $G$ as the single pole term in the ope of $X$ with its superpartner, or the last equation in \eqref{eq:svspin7}. Similarly, in the case of $SVG_2$ we obtain all other fields from $\Phi$ and its superpartner, as $X$ and $G$ can be read of from the first and second equations in \eqref{eq:svg2}. 
\label{rem:generating-currents}
\end{rem}
\section{Chiral de Rham Complex} 
\begin{nolabel}\label{no:beta-gamma}
The $n$-dimensional $bc$-$\beta\gamma$ system is the supersymmetric vertex algebra generated by $n$ even fields $\{\gamma^i\}_{i=1}^n$, $n$ odd fields $\{b_i\}_{i=1}^n$ and their superpartners:
\[ \ds \gamma^i = c^i, \qquad \ds b_i = \beta_i, \] with the only non-vanishing OPEs:
\[ \beta_i(z) \cdot \gamma^j(w) \sim \frac{\delta_i^j}{z-w}, \qquad b_i(z) \cdot c^j(w) \sim \frac{\delta_i^j}{z-w}. \] 
The odd vector 
\begin{equation} \label{eq:g-global-sec} G = \sum_{i=1}^{n} \left( \beta_i c^i + b_i \partial \gamma^i \right). \end{equation}
defines a superconformal structure of central charge $c=3n$.  With respect to this structure, the generators $\gamma^i$ are primary of conformal weight $0$ while the generators $b_i$ are primary of conformal weight $1/2$. It follows that their superpartners $c^i$ and $\beta_i$ have respectively conformal weight $1/2$ and $1$. 

The subalgebra generated by $\gamma^i$ and their superpartners is (super) commutative and is isomorphic to $\mathbb{C}[\gamma^i, \ds \gamma^i, \ds^2 \gamma^i, \cdots ]$. In particular the commutative algebra $\mathbb{C}[\gamma^1,\cdots, \gamma^n]$ is a subalgebra. One can extend the vertex algebra structure on the $n$-dimensional $bc$-$\beta\gamma$ system by replacing this commutative algebra to allow for arbitrary smooth (resp. holomorphic) functions on $\gamma^i$, imposing that $\ds$ is an odd derivation. In what follows by the smooth (resp. holomorphic) $n$-dimensional $bc$-$\beta\gamma$ system we will mean the corresponding vertex algebra. 
\end{nolabel}
\begin{nolabel}\label{no:coords1}
Let $M$ be a smooth $n$-dimensional manifold and $\mathbb{R}^n \simeq U \subset M$ be a coordinate patch with coordinates $\left\{ x^i \right\}_{i=1}^n$ and let $U'$ be another coordinate patch with coordinates $\{\tilde{x}^i\}_{i=1}^n$. On the intersection $U \cap U'$ we have the corresponding change of coordinates and their inverses: 
\begin{equation} \label{eq:changes-def} \tilde{x}^i = f^i(x^1, \cdots, x^n), \qquad x^i = g^i(\tilde{x}^1, \cdots, \tilde{x}^n). \end{equation}
In \cite{malikov} the authors introduced a sheaf of vertex algebras $\Omega^{ch}_M$ that locally is given by the smooth $n$-dimensional $bc$-$\beta\gamma$ system. That is, locally $\Omega^{ch}(U)$ is generated by the fields $\left\{ \gamma^i, b_i \right\}_{i=1}^n$ and their superpartners and $\Omega^{ch}(U')$ by $\left\{ \tilde{\gamma}^i, \tilde{b}_i \right\}_{i=1}^n$ and their super-partners. On the intersection $U \cap U'$ we have the relations:
\begin{equation} \label{eq:change1} \tilde{\gamma}^i = f^i(\gamma^1, \cdots, \gamma^n), \qquad \tilde{b}_i = \sum_{j = 1}^n \left( \frac{\partial g^j}{\partial \tilde{x}^i}\left(\tilde{\gamma}^1, \cdots, \tilde{\gamma}^n \right) \right)b_j, \qquad \tilde{\ds} = \ds. 
\end{equation}
The transformation properties for $\left\{ c^i, \beta_i \right\}$ are obtained by applying $\ds$ to these expressions and using that $\ds$ is a derivation, that is 
\begin{equation} \label{eq:change2} \tilde{c}^i = \ds \tilde{\gamma}^i = \ds f^i(\gamma^1, \cdots, \gamma^n) = \sum_{j=1}^n \left( \frac{\partial f^i}{\partial x^j} (\gamma^1, \cdots, \gamma^n) \right) \ds \gamma^j = 
\sum_{j=1}^n \left( \frac{\partial f^i}{\partial x^j} (\gamma^1, \cdots, \gamma^n) \right) c^j,
\end{equation}
and 
\begin{multline}\label{eq:change3}
\tilde{\beta}_i = \ds \tilde{b}_i = \ds \left (\sum_{j = 1}^n \left( \frac{\partial g^j}{\partial \tilde{x}^i}\left(\tilde{\gamma}^1, \cdots, \tilde{\gamma}^n \right) \right)b_j\right) = \\ \sum_{k,j=1}^n \left( \frac{\partial^2 g}{\partial \tilde{x}^j \partial \tilde{x}^k}(\tilde{\gamma}^1, \cdots, \tilde{\gamma}^n) \ds \tilde{\gamma}^k \right) b_j + \sum_{j = 1}^n \left( \frac{\partial g^j}{\partial \tilde{x}^i}\left(\tilde{\gamma}^1, \cdots, \tilde{\gamma}^n \right) \right) \ds b_j = \\ 
= \sum_{j,k,l=1}^n \left( \frac{\partial^2 g}{\partial\tilde{x}^j \partial\tilde{x}^k}(\tilde{\gamma}^1, \cdots, \tilde{\gamma}^n) \frac{\partial f^k}{\partial x^l}(\gamma^1,\cdots, \gamma^n) c^l \right) b_j + \sum_{j=1}^n \left( \frac{\partial g^j}{\partial{\tilde{x}^i}}(\tilde{\gamma}^1, \cdots, \tilde{\gamma}^n) \right) \beta_j
\end{multline}
It was proved in \cite{malikov} that the changes of coordinates \eqref{eq:change1}--\eqref{eq:change3} are given by automorphisms of the vertex algebra structure, that is, the only nontrivial OPEs are given by 
\[ \tilde{\beta}_i(z) \cdot \tilde{\gamma}^j(w) \sim \frac{\delta_i^j}{z-w}, \qquad \tilde{b}_i(z) \cdot \tilde{c}^j(w) \sim \frac{\delta_i^j}{z-w}. \] 
\end{nolabel}
\begin{nolabel}\label{no:courant1}
Here is a coordinate-free description of the chiral de Rham complex. The following was proved in \cite{heluani8} and is essentially a remark based on the work of Bressler \cite{bressler} who found the relation between the \emph{vertex algebroids} of \cite{gms} and Courant algebroids (see also \cite{beilinsondrinfeld} where the concept of chiral differential operators and their corresponding envelopes where studied and \cite{roytenberg-dorfman} where the classical limits where studied in relation to Poisson vertex algebras). Let $E$ be a Courant algebroid on $M$. Then there exists a unique sheaf of supersymmetric vertex algebras $\Omega=\Omega(E)$ on $M$ together with maps 
\[ \iota: C^\infty(U) \hookrightarrow \Omega(U) \hookleftarrow \Gamma(U, E): j\]
where the image of $\iota$ consists of even vectors and that of $j$ of odd vectors such that 
\begin{enumerate}
\item $\iota$ is compatible with the algebra structure of $C^\infty(U)$:  $\iota(1) = \vac \in \Omega(U)$ and $\iota (fg) = \iota(f) \iota(g)$ for all $f,g \in C^\infty(U)$. 
\item $j$ and $\iota$ are compatible with the $C^\infty(U)$-module structure on $\Gamma(U, E)$:  $j (f e) = i(f) \cdot j(a)$, for all $f \in C^\infty(U), a \in \Gamma(U, E)$
\item There is a compatibility between the supersymmetry generator in $\Omega(E)$ and the differential $d : C^\infty (U) \rightarrow \Gamma(U, E)$ which is part of the Courant algebroid datum: $\ds \iota (f) = j(df)$.
\item There is a compatibility between the Dorfman bracket and symmetric pairing on $E$ and the OPE on $\Omega(E)$:
\[ (j(a))(z) \cdot (j(b))(w) \sim \frac{(\iota (\langle a,b\rangle)) (w)}{z-w}, \qquad 
(\ds j(a))(z) \cdot (j(b))(w) \sim \frac{(j ([ a,b])) (w)}{z-w}.\]
\end{enumerate}
The sheaf $\Omega(E)$ is universal with the above properties, in the sense that if there is another triple $(\Omega'(E), \iota', j')$ with the same properties, then there exists a unique morphism of sheaves of vertex algebras $\Omega(E) \rightarrow \Omega'(E)$ intertwining $\iota, j$ with $\iota', j'$. 

When $E$ is the standard Courant algebroid $T_M \oplus T^*_M$ we have an identification of $\Omega(E)$ and $\Omega^{ch}_M$. Locally, on the coordinate chart $U$ with coordinates $\left\{ x^i \right\}_{i=1}^n$ the identification is given by 
\[ \iota \left( f(x^1,\cdots,x^n) \right) \mapsto f(\gamma^1, \cdots, \gamma^n), \qquad j \left( \frac{\partial}{\partial x^i} \right) = b_i. \]
Notice that by c) above we have $j (dx^i) \mapsto c^i$. This is the \emph{naive-embedding} of \ref{no:sheaf}. 
\end{nolabel}

\begin{nolabel}\label{no:affine}
The construction in \ref{no:courant1} is interesting even when $M$ reduces to a point. In this case a Courant algebroid $E$ is given by a finite dimensional Lie algebra $\fg$ together with a symmetric invariant and non-degenerate bilinear form $\langle, \rangle$. The corresponding vertex algebra $\Omega(E)$ is simply given by $V(\fg_{super})$ as in \ref{no:vgsuper}. 
\end{nolabel}
\begin{nolabel}\label{no:holomorphic}
The same construction as in \ref{no:courant1} can be realized when $E$ is a holomorphic  Courant algebroid over a holomorphic manifold or a complex algebraic Courant algebroid over a smooth algebraic complex variety, by replacing $C^\infty(U)$ by holomorphic (resp. complex algebraic) functions in $U$ and $\Gamma(U,E)$ by holomorphic (resp. complex algebraic) sections of $E$ over $U$. In the case of the standard Courant algebroid $TM \oplus T^*M$ we would obtain the holomorphic or algebraic versions of $\Omega^{ch}_M$. 
\end{nolabel}
\begin{nolabel}
By construction there is an obvious embedding $T^*M \rightarrow \Omega^{ch}_M$ which is given either by c) in \ref{no:courant1} or by the local assignment \[ \sum_{i=1}^n f_i(x^1,\cdots,x^n) dx^i \mapsto \sum f_i(\gamma^1, \cdots, \gamma^n) c^i. \]  

One can extend this to an embedding 
\begin{equation} \label{eq:embedding1} 
\wedge^\bullet T^*M \rightarrow \Omega^{ch}_M. 
\end{equation} 
It was proved in \cite{malikov} that one can endow $\Omega^{ch}_M$ with a $\mathbb{Z}$-grading and an odd endomorphism $Q_0: \Omega^{ch,\cdot}_M \rightarrow \Omega^{ch,\cdot + 1}_M$ such that $Q_0^2 = 0$ and making the embedding \eqref{eq:embedding1} a quasi-isomorphism. 

If $M$ is orientable then the local sections defined in a coordinate patch $(U, \left\{ x^i \right\}_{i=1}^n)$ by 
\begin{equation} \label{eq:current} J = \sum_{i=1}^n c^i b_i \in \Omega^{ch}(U),\end{equation}
are shown in \cite{malikov} to glue to a global section $J \in \Omega^{ch}(M)$. The corresponding zero mode $J_0 \in \End \Omega^{ch}(M)$ is semisimple with integer eigenvalues. This defines a $\mathbb{Z}$-grading on $\Omega^{ch}(M)$. In fact, even when $M$ is not orientable, in which case $J$ is not globally well defined, its zero mode $J_0$ still is. 

A simple way of seeing this grading is as follows: One assigns the even vectors $\gamma^i$ and $\beta_i$ degree $0$ and the odd generators $c^i$ (resp. $b_i$) degree $+1$ (resp. $-1$) and extend this to $\Omega^{ch}_M$ by declaring the normally ordered product $\cdot$ and the translation operator $\partial$ to be of degree $0$. One immediately see that the transformation formulas \eqref{eq:change1}--\eqref{eq:change3} are compatible with these degree assignments and we obtain this way a $\mathbb{Z}$-grading on the vertex algebra. 

Notice however that this $\mathbb{Z}$-grading is not compatible with the supersymmetric structure $\ds$. Indeed locally we have $\gamma^i$ and $\partial \gamma^i$ are of degree $0$ but $\partial \gamma^i = \ds^2 \gamma^i = \ds c^i$ and $c^i$ is of degree $1$. 

The differential $Q_0$ is constructed by defining on generators $Q_0 \gamma^i = c^i$ and $Q_0 b_i = \beta_i$ and declaring it to be a derivation such that $Q_0^2 = 0$. We see that $Q_0$ agrees with $\ds$ on the generators $\gamma^i$ and $b_i$ but not on $c^i$ and $\beta_i$, hence $Q_0$ is a derivation of the vertex algebra structure of $\Omega^{ch}_M$ but does not preserve its supersymmetric structure. 
\end{nolabel}
\begin{nolabel}\label{no:topological-twist2}
The reason why $Q_0$ does not preserve the supersymmetric structure of $\Omega^{ch}_M$ involves a topological twist as in \ref{no:topological-twist1}. In addition to the field $J$ of \eqref{eq:current} that exists when $M$ is orientable, there is another global field that one can define locally by \eqref{eq:g-global-sec}. 

It is straightforward to check that the OPEs of these fields $J$ and $G$ and their superpartners satisfy the OPEs as in \ref{no:n=2} with central charge $c=3n$ hence they induce an embedding of the $N=2$ superconformal algebra of central charge $3n$ into $\Omega^{ch}_M$. As explained in \ref{no:n=1} the zero mode of $G$ coincides with $\ds$. 

As in \ref{no:topological-twist1} one may consider the same $N=2$ vertex algebra but with a different choice of Virasoro 
\begin{equation} \label{eq:new-virasoro}
T = L + \tfrac{1}{2} \partial J, 
\end{equation} 
of central charge $0$, in which case the two vectors $Q$ and $H$ of \eqref{eq:n=2-newgen} acquire different conformal weights $1$ and $2$ respectively. The zero mode $Q_0$ of the field corresponding to $Q$ is an endomorphism satisfying $Q_0^2 = 0$. In terms of the local generators we have
\begin{equation} \label{eq:q-and-h} Q = \sum_{i=1}^n \beta_i c^i, \qquad H = \sum_{i=1}^{n} b_i \partial \gamma^i, \end{equation}
and we have $\ds = Q_0 + H_{-1}$. 

As in \ref{no:topological-twist1} the cohomology $H^\bullet(\Omega^{ch}_M, Q_0)$ is concentrated in conformal weight $0$. With respect to $T$ given by \eqref{eq:new-virasoro}, the fields $\{ \gamma^i, c^i \}_{i=1}^n$ have conformal weight $0$ while the other generators $\left\{ \beta_i, b_i \right\}_{i=1}^n$ have conformal weight $1$. It follows that locally the sub-complex of conformal weight zero in $\Omega^{ch}_M$ is simply the (super)-commutative algebra generated by $\left\{ \gamma^i, c^i \right\}$. This is identified with $\wedge^\bullet T^*_M$ via \eqref{eq:embedding1}. Under this identification $Q_0$ is identified with the de Rham differential. Indeed $c^i$ (or rather its zero mode) simply acts by multiplication by $c^i$ which in turn is identified with $dx^i$, while $\beta_i$ (or rather its zero mode) acts on forms as $\partial/\partial x^i$. 

When the manifold $M$ is not orientable, the section $G$ is not defined globally, however the zero modes $Q_0$, $H_0$ are. This is the way is the way in which it was proved in \cite{malikov} that the cohomology $H^\bullet(\Omega^{ch}_M, Q_0)$ equals de de Rham cohomology of $M$. 
\end{nolabel}
\begin{nolabel}\label{no:holomorphic-orientable}
In the holomorphic or complex analytic setting, the existence of the section \eqref{eq:g-global-sec} is guaranteed when $M$ is holomorphically orientable, that is, when there exists a global holomorphic volume form, or when the first Chern class of the holomorphic tangent bundle of $M$ vanishes. If in addition $M$ is simply connected, this is a Calabi-Yau manifold. Thus, if $M$ is a Calabi-Yau manifold, the holomorphic sections of the holomorphic chiral de Rham complex $\Omega^{ch}_M$ form a vertex algebra with an $N=2$ superconformal structure of central charge $3 \dim_\mathbb{C} M$. This was the setting in the original work \cite{malikov}. 

If in addition $M$ is compact, the space of global holomorphic sections $H^0(M, \Omega^{ch}_M)$ is a graded vertex algebra (both by conformal weight and by charge) with finite dimensional conformal weight eigenspaces.  More generally, the full sheaf cohomology $V:=H^*(M, \Omega^{ch}_M)$ is a superconformal vertex algebra with central charge $c=3 \dim_{\mathbb{C}} M$. Letting $T_0$ be the zero mode of the Virasoro element $T$ of \eqref{eq:new-virasoro}, and $J_0$ the zero mode of \eqref{eq:current}, Borisov and Libgober showed in \cite{borisov-libgober} that
\begin{equation} \label{eq:borisov-libgober} \mathcal{Ell}_M(\tau,\alpha) = y^{-c/6} tr_{V} q^{T_0} y^{J_0}, \qquad q=e^{2  \pi i \tau}, y = e^{2 \pi i \alpha}, \qquad \tau \in \mathbb{H}, \alpha \in \mathbb{C}\end{equation} 
converges to a weak Jacobi Form of weight $0$ and index $\frac{1}{2} \dim_{\mathbb{C}} M$ which is called the \emph{two variable elliptic genus of $M$}. This is a cobordism invariant of $M$. 
\end{nolabel}
\begin{nolabel}\label{no:two-sectors}
In the $C^\infty$ context however, the algebra of smooth sections $\Gamma(U, \Omega^{ch}_M)$ is too large. When $M$ is orientable it has an $N=2$ superconformal structure of central charge $3 \dim_{\mathbb{R}} M$, but even if $M$ is compact, the space of conformal weight $0$ elements with respect to the untwisted Virasoro element $L$ (respectively with respect to $T$ given by \ref{eq:new-virasoro}) consists of all smooth functions on $M$ (resp. smooth differential forms on $M$). There are two approaches in this situation, either one focuses on holomorphic/antiholomorphic sections or one uses a different embedding instead of \eqref{eq:embedding1}. Let us describe the first approach postponing the description of the second approach after a general discussion on Boson-Fermion systems in \ref{no:boson-fermion-vs-ghosts}. 

If $M$ is a holomorphic $2n$-manifold, the cotangent bundle naturally splits as $T^* = T^*_{1,0} \oplus T^*_{0,1}$. The embedding \eqref{eq:embedding1} is compatible with this decomposition, in the sense that locally, choosing holomorphic (resp. anti-holomorphic) coordinates $\left\{ z^\alpha \right\}_{\alpha = 1}^n$ (resp. $\left\{ z^{\bar \alpha} \right\}$ ) we have locally $4n + 4n$ generators of $\Omega^{ch}_M$ given by $\left\{ \gamma^\alpha, c^\alpha, \beta_\alpha, b_\alpha \right\}_{\alpha=1}^n$ and their anti-holomorphic counterparts. Since we can cover $M$ by such coordinate patches and on intersection we have biholomoprhic changes of coordinates \eqref{eq:changes-def} and the transformation properties \eqref{eq:change1}--\eqref{eq:change3} are compatible with these changes. In short, the sheaf $\Omega^{ch}_M$ is naturally the tensor product of two commuting copies of the holomorphc chiral de Rham complex of $M$. Naming this last one by $\Omega^{ch, hol}_M$ to differentiate it from its $C^\infty$-version, and $\overline{\Omega}^{ch,hol}_M$ the complex generated by the anti-holomorphic counterparts, we have 

\begin{equation} \label{eq:tensor-iso} \Omega^{ch}_M \simeq \Omega_M^{ch,hol} \otimes \overline{\Omega}^{ch, hol}_M. \end{equation} 

There is a subtlety at the level of functions in that one cannot express smooth functions as tensor products of holomorphic times antiholomorphic functions. We need not be concerned about this issue that can be resolved either by completing the tensor product or tensoring over the commutative vertex algebra generated by smooth functions of $M$. 

In this situation, \eqref{eq:current} can be written as 
\begin{equation} \label{eq:current2}
J = \sum_\alpha c^{\alpha} b_\alpha + \sum_{\bar \alpha} c^{\bar\alpha} b_{\bar\alpha} = J^{hol} + \bar{J}^{hol}, 
\end{equation} 
where each summand is well defined and commutes with the other one. 
Similarly, \eqref{eq:g-global-sec} can be written as
\begin{equation} \label{eq:g-global-sec2} 
G = \left( \sum_{\alpha = 1}^n \left( \beta_\alpha c^{\alpha} + b_\alpha \partial\gamma^{\alpha} \right)\right) + \left( \sum_{\bar\alpha =1}^{n} \left( \beta_{\bar\alpha}c^{\bar\alpha} + b_{\bar\alpha} \partial \gamma^{\bar{\alpha}} \right) \right) = G^{hol} + \bar{G}^{hol}, 
\end{equation}
however, each of the two summands is not well defined globally. If $M$ is holomorphically orientable, one can easily show (as it was done in \cite{malikov}) that each summand is a well defined section of $\Omega^{ch}_M$

These four sections and their corresponding superpartners generate two commuting copies of the $N=2$ superconformal algebra of central charge $3 \dim_{\mathbb{C}}M$ which not surprisingly under the isomorphism \eqref{eq:tensor-iso} simply correspond to the holomorphic $N=2$ structure of \ref{no:holomorphic-orientable}, one in each factor. 
\end{nolabel}
\begin{nolabel}\label{no:drawback1}
Since we have two commuting copies of $N=2$ we can make a topological twist in each sector and compute the cohomology with respect to the corresponding differential. Let $\overline{T}, \overline{J}, \overline{Q}$ and $\overline{H}$ be the generators of the $N=2$ inside of the $\overline{\Omega}^{ch,hol}_M$ factor.  If we consider the differential $\overline{Q}_0$ given as the zero mode of (cf. \eqref{eq:q-and-h})
\[ \overline{Q} = \sum_{\bar\alpha} \beta_{\bar\alpha} c^{\bar\alpha}, \] 
We know that the cohomology $H(\Omega^{ch}_M, \overline{Q}_0)$ is a sheaf of vertex algebras that is concentrated in conformal weight zero for $\overline{T}_0$. The space of conformal weight zero vectors for this Virasoro element is generated locally by all the local sections $c^\alpha, \partial \gamma^\alpha, b_\alpha, \beta_\alpha$ of $\Omega^{ch,hol}$ as well as all smooth sections of 
\begin{equation} \label{eq:dolbeaut1} \wedge^\bullet T^*_{0,1}. \end{equation}
Indeed notice that $\overline{T}$ commutes with all of $\Omega_M^{ch,hol}$, it is convenient to consider the smooth functions separatedly and that is why we include them in \eqref{eq:dolbeaut1}
The action of $\overline{Q}_0$ on $\Omega^{ch,hol}_M$ is zero, while on \eqref{eq:dolbeaut1} it coincides with the Dolbeaut differential. Since locally we have a $\bar{\partial}$ Poincaré lemma, the cohomology of \eqref{eq:dolbeaut1} is the sheaf $\cO_M$ of holomorphic functions of $M$, therefore we obtain 
\begin{equation} \label{eq:cohom1}
H(\Omega^{ch}_M, \overline{Q}_0) \simeq \Omega^{ch, hol}_M, 
\end{equation}
In other words, the obvious or \emph{naive} embedding of sheaves of vertex algebras:
\[ \Omega^{ch, hol} \hookrightarrow (\Omega^{ch}_M, \overline{Q}_0), \]
is a quasi-isomorphism. Since the remaining $N=2$ commutes with $\overline{Q}_0$ we see that it survives cohomology and it coincides with the $N=2$ structure of \ref{no:holomorphic-orientable}.  We can now look at the sheaf cohomology on $M$, in other words:
\begin{equation} \label{eq:ell2}  R\Gamma^\bullet(M, H(\Omega^{ch}_M, \overline{Q}_0)) \simeq R\Gamma^\bullet(M, \Omega^{ch,hol}_M), \end{equation}
providing a vertex algebra with finite dimensional graded pieces, and hence we can look at the modular character as in \eqref{eq:borisov-libgober}. 

Note however that we took the cohomology of the \emph{sheaf} $\Omega^{ch}_M$ with respect to the sheaf endomorphism $\overline{Q}_0$. We could perform this procedure in the reverse order,  taking first global sections (the higher sheaf cohomologies vanish in the smooth situation) and then the  cohomology with respect to $\overline{Q}_0$. A standard argument with spectral sequences relates these two cohomologies. We obtain that the elliptic genus of $M$ can be computed as 
\begin{equation}\label{eq:ell3}
\mathcal{Ell}_M(\tau,\alpha) = y^{- \frac{3}{4} \dim_{\mathbb{R}}M} \tr_{H \left( C^\infty(M, \Omega_M^{ch}), Q_0^- \right)} q^{T_0^+} y^{J^+_0}, \qquad q=e^{2\pi i \tau},\: y = e^{2 \pi i \alpha}.
\end{equation}

\end{nolabel}
\begin{nolabel}\label{no:drawback2}
One drawback of the approach described in \ref{no:drawback1} is that it requires us to deal with technical subtleties as completing the tensor product of holomorphic/anti-holomorphic funtions, as well as it requires us to have special coordinate systems (holomorphic in this case). What can we do on other special holonomy cases? Can we obtain a vertex operator algebra with finite dimensional energy spaces? can we attach a modular form/function to such a manifold by taking traces of these vertex algebras? These questions have been studied intensely in the physics literature in the past. We mention specifically the approach of de Boer, Naqvi and Shommer \cite{boer} that is close to our needs in the case of $G_2$-manifolds. 

Another approach to seeing the two commuting $N=2$ structures on the chiral de Rham complex of Calabi-Yau manifolds started in \cite{heluani8} and extended for for all special holonomy manifolds in \cite{heluaniholonomy} consists on using a Riemannian metric on $M$ instead to produce two different and non-commuting embeddings replacing \eqref{eq:embedding1}. Before we describe this we digress on the Boson-Fermion system.
\end{nolabel}
\begin{nolabel}\label{no:boson-fermion-vs-ghosts}
The $bc$-$\beta-\gamma$ system described in \ref{no:beta-gamma} is sometimes called a \emph{ghost} system in the physics literature. This kind of vertex algebras appears in the \emph{first order} formalism, or when trying to use a Hamiltonian approach to quantize the sigma model \cite{heluani-sigma}. In the usual approach however the basic vertex algebra of free fields is that of the $n$-dimensional \emph{Boson-Fermion} system. 

Let $V$ be a vector space with a symmetric non-degenerate bilinear form $\langle, \rangle$. The Boson-Fermion system based on $V$ consists of the supersymmetric vertex algebra $BF(V)$ generated by odd vectors $\bar{v}$, $v \in V$ satisfying the OPEs
\begin{equation} \label{eq:boson-fermion} \bar{v}(z) \cdot \bar{u}(w) = \frac{\langle v,u \rangle}{z-w},  \qquad (\ds \bar{v})(z) \cdot \bar{u}(w) \sim 0. \end{equation}
If we denote the superpartners $v:=\ds \bar{v}$ it follows by applying $\ds$ to \eqref{eq:boson-fermion}
\begin{equation} \label{eq:boson-fermion2} v(z) \cdot u(w) \sim \frac{\langle v, u\rangle}{(z-w)^2}. \end{equation} 

Let now $\left\{ v_i \right\}_{i=1}^n$ be a basis for $V$ and let $\left\{ v^i \right\}_{i=1}^n$ be the dual basis with respect to $\langle,\rangle$. Then the vector
\begin{equation}\label{eq:boson-fermion-g}
G = \sum_{i=1}^n \bar{v}_i \cdot v^i,
\end{equation} 
defines an $N=1$ superconformal structure of central charge $c=3n/2$ in $BF(V)$. With respect to this structure the generators $\bar{v}$ are primary of conformal weight $1/2$. 

Let now $g_{ij}$ be the matrix of $\langle,\rangle$ with respect to this basis, so that \eqref{eq:boson-fermion}--\eqref{eq:boson-fermion2} read now:
\begin{equation} \label{eq:boson-fermion3} \bar{v}_i(z) \cdot \bar{v}_j(w) \sim \frac{g_{ij}}{z-w}, \qquad \bar{v}_i(z) \cdot v_j(w) \sim 0, \qquad v_i(z) \cdot v_j(w) \sim \frac{g_{ij}}{(z-w)^2}. \end{equation}

We have two different commuting embeddings of $BF(V)$ into the $bc-\beta\gamma$ system given as the graph of $g$. 
\begin{equation} \label{eq:embedding2} \bar{v}_i \mapsto \frac{b_i \pm \sum_{j=1}^n g_{ij} c^j}{\sqrt{\pm 2}}. \end{equation}
Notice that this forces their superpartners to be 
\begin{equation}\label{eq:embedding3}
v_i \mapsto \frac{\beta_i \pm \sum_{j=1}^{n} g_{ij} \partial \gamma^i}{\sqrt{\pm 2}}. 
\end{equation}
We can write these embeddings as a single embedding from a tensor product $BF(V) \otimes BF(V)$ into the $bc-\beta\gamma$ system. Notice that the image of this embedding consists of the vacuum vector and all the vectors in the $bc-\beta\gamma$ system of conformal weight higher than $0$ with respect to the conformal structure determined by \eqref{eq:g-global-sec}. The only fields that are missing in the image are the vectors $\gamma^i$. In other words, the subalgebra of the $bc-\beta\gamma$ system generated by the odd vectors $c^i, b_i$ and their superpartners $\partial \gamma^i, \beta_i$ consists of two copies of the Boson-Fermion system on an $n$-dimensional vector space. 

Notice that the image of the superconformal structure \eqref{eq:boson-fermion-g} under these two embeddings produces two commuting $N=1$ conformal structures $G_\pm$ in the $bc-\beta\gamma$ system. Their sum coincides with \eqref{eq:g-global-sec}. Indeed we have 
\[ \bar{v}^i \mapsto \frac{\sum_{j=1}^{n} g^{ij}b_j \pm c^i}{\sqrt{\pm 2}}, \]
where we use the inverse metric $g^{ij}$ so that \[ \sum_{j=1}^n g^{ij}g_{kj} = \delta^i_k. \]
It follows that the two images of \eqref{eq:boson-fermion-g} are given by
\begin{multline}\label{eq:boson-fermion-gpm}
G_\pm = \pm \frac{1}{2} \sum_{i=1}^n \left( b_i \pm \sum_{j=1}^n g_{ij} c^j \right) \left( \sum_{j=1}^n g^{ij} \beta_j \pm \partial \gamma^i \right) = \\  \frac{1}{2} \left( b_i \partial \gamma^i + \beta_i c^i \right) \pm \frac{1}{2} \sum_{i,j = 1}^n\left( g^{ij} b_i \beta_j + g_{ij} c^i \partial \gamma^j  \right), 
\end{multline} 
such that $G = G_+ + G_-$. Notice however that these two commuting $N=1$ structures are not the same as \eqref{eq:g-global-sec2} obtained from the decomposition \eqref{eq:tensor-iso}. 
\end{nolabel}
\begin{nolabel}\label{no:difficulty-in-cdr}
Let $M$ be an $n$-dimensional smooth manifold and $g$ a non-degenerate smooth metric (of any signature) on $TM$. At each point $x \in M$ we have the vector space $V = T_xM$ with its bilinear non-degenerate pairing $\langle, \rangle = g_x$. We can try to repeat the construction of \ref{no:boson-fermion-vs-ghosts} to relate the free Boson-Fermion system with the $bc-\beta\gamma$ system and therefore with $\Omega^{ch}_M$. 

There are immediate problems arising from the fact that the metric is not constant. 
Indeed in the local coordinate patch $(U, \left\{ x^i \right\}_{i=1}^n)$ we have $g_{ij} = g_{ij}(x^1, \cdots,x^n)$. Inside of $\Omega^{ch}_M(U)$ we have the corresponding local section $g_{ij}(\gamma^1,\cdots,\gamma^n)$. The local sections given by the RHS of \eqref{eq:embedding2} still satisfy the OPE given by the first equation of \eqref{eq:boson-fermion3}. However, their superpartners are not given by the RHS of \eqref{eq:embedding3} but instead 
\begin{equation}\label{eq:super-partners}
v_i \mapsto \ds \left( \frac{b_i \pm \sum_{j=1}^n g_{ij} c^j}{\sqrt{\pm 2}} \right) = \left(  \frac{\beta_i \pm \sum_{j=1}^{n} g_{ij} \partial \gamma^j}{\sqrt{\pm 2}} \right) \pm \frac{\sum_{j,k = 1}^n \frac{\partial g_{ij}}{\partial x^k}(\gamma^1, \cdots,\gamma^n) c^k c^j }{\sqrt{\pm 2}}, 
\end{equation}
where in the second term we have used that $\ds$ is a derivation and we notice that we do not need to worry about the parenthesis in the product. 

Here is the first complication in trying to pass from a flat space with a constant metric $g_{ij}$ to the chiral de Rham complex of a manifold with a non-constant metric $g_{ij}(x^1,\cdots,x^n)$: the appearance of terms like the second term in \eqref{eq:super-partners} makes it very hard to write down local expressions for sections that will glue into global sections of $\Omega^{ch}_M$. 

If one introduces the Levi-Civita connection  $\nabla$ such that the metric is parallel: $\nabla g = 0$ we can trade the derivatives of the metric in the second term in \eqref{eq:super-partners} for terms that are linear in $g_{ij}$ but that are multiplied by the Christoffel symbols $\Gamma_{ij}^k$ of $\nabla$. 

Note however that in order to check the second equation in \eqref{eq:boson-fermion3} we need to compute the OPE:
\begin{multline}
\left( \frac{b_i \pm \sum_{j=1}^n g_{ij} c^j}{\sqrt{\pm 2}} \right)(z) \cdot \left[ \left(   \frac{\beta_j \pm \sum_{j=1}^{n} g_{jk} \partial \gamma^k}{\sqrt{\pm 2}} \right) \pm \frac{\sum_{k,l = 1}^n \frac{\partial g_{jk}}{\partial x^l}(\gamma^1, \cdots,\gamma^n) c^l c^k }{\sqrt{\pm 2}} \right](w) \sim  \\ \frac{1}{2 (z-w)} \left( \sum_{k=1}^n \frac{\partial g_{jk}}{\partial x^i}(\gamma^1,\cdots,\gamma^n) c^k - \sum_{l=1}^n \frac{\partial g_{ji}}{\partial x^l}(\gamma^1,\cdots,\gamma^n) c^l  - \sum_{k=1}^n \frac{\partial g_{ik}}{\partial x^j} (\gamma^1,\cdots,\gamma^n) c^k \right)(w) = \\ 
- \frac{\left(\sum_{k,l=1}^n g_{ik} \Gamma^{k}_{jl} c^l \right)(w)}{z-w}, 
\label{eq:nasty1}
\end{multline}
which is zero only when the metric is flat!

These are the major complications in trying to understand the supersymmetry of the chiral de Rham complex on special holonomy manifolds: most of the physics literature is based upon expansions near a flat metric, in which one has a Boson-Fermion system with a constant metric $g_{ij}$ as in the previous section. One then finds embeddings of the algebras described in \ref{sec:examples} into these free Boson-Fermion systems. It is quite complicated to then pass to global sections of $\Omega^{ch}_M$ in the presence of a non-constant $g_{ij}$. 
\end{nolabel}

\begin{nolabel}\label{no:overcome}
An approach to deal with the problems mentioned in \ref{no:difficulty-in-cdr} was started in \cite{heluani2} and further developped in \cite{heluani8,heluaniholonomy}. The idea is to provide two embeddings of $T^*M \hookrightarrow \Omega^{ch}_M$ generalizing that of the (dual of the) previous section. So let $(M, g)$ be a manifold with a non-degenerate metric and let $\nabla$ be the Levi-Civita connection of $g$. Let $\Gamma_{ij}^k$ be the Christoffel-Symbols of $\nabla$ defined on local coordinates. 

Locally on a coordinate patch $(U, \left\{ x^i \right\}_{i=1}^n)$ we define. 
\begin{equation}\label{eq:local-e}
e^i_{\pm} = \frac{\sum_{j=1}^n g^{ij}b_j \pm c^i}{\sqrt{\pm 2}} \in C^{\infty}(U,\Omega^{ch}_M), \qquad i = 1,\cdots,n.
\end{equation}
These are the duals to the local sections of $\Omega^{ch}_M$ defined by the RHS of \eqref{eq:embedding2}. 

Let now $\omega \in C^\infty(M, \wedge^k T^*M)$ be a smooth $k$-form. Locally on the coordinate patch $U$ this form is written as 
\[ \sum_{i_1,\ldots,i_k} w_{i_1,\ldots,i_k} \, dx^{i_1} \wedge \cdots \wedge dx^{i_k}. \]

Define the numbers $T_{r,s}$ as the coefficients of the Bessel polynomials \cite{grosswald}
\begin{equation}
y_r  (x) = \sum_{s=0}^r T_{r,s} x^s = \sum_{s=0}^r \frac{(r + s)!}{(r-s)! s! 2^{s}} x^s,
\label{eq:bessel1}
\end{equation}
and let $T_{r,s}:=0$ when $s<0$ or $s>r$.  The following is the main technical result of \cite{heluaniholonomy}
\begin{thm*}
The local sections 
\begin{multline}\label{eq:local-currents}
J_\pm = \frac{1}{k!} \sum_{i_1,\dots,i_k}
\sum_{s=0}^{\lfloor\frac{k}{2}\rfloor} T_{k-s,s}  \sum_{\stackrel{j_1,\dots,j_{2s-1}}{l_1,\dots,l_{2s -1}}} 
\omega_{i_1,\dots,i_k}  \Gamma_{j_1,l_1}^{i_1} g^{i_2 j_1} \partial \gamma^{l_1} \dots \\  \dots    \Gamma_{j_{2s -1}, l_{2s-1}}^{i_{2s-1}} g^{i_{2s},j_{2s-1}} \partial \gamma^{l_{2s-1}} \Bigl(e_\pm^{i_{2s+1}} \Bigl( e_{\pm}^{i_{2s+2}} \Bigl( \dots e_{\pm}^{i_{n}} \Bigr) \Bigr) \dots \Bigr) \in C^{\infty}(U, \Omega^{ch}_M)
\end{multline}
Agree on intersections and hence they are restrictions to $U$ of two well defined global sections $J_\pm \in C^\infty(M, \Omega^{ch}_M)$. 
\end{thm*}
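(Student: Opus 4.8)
The statement is a cocycle verification: one must check that the local formula \eqref{eq:local-currents}, written in the coordinates $\{x^i\}$ on $U$ and in the coordinates $\{\tilde x^i\}$ on $U'$, produces the same section of $\Omega^{ch}_M$ over $U\cap U'$ once the transition isomorphism \eqref{eq:change1}--\eqref{eq:change3} is applied. The plan is to bring both expressions to a common normal form, written in terms of the sections $e^j_\pm$, the fields $\partial\gamma^l$, and functions of $\gamma$, and to keep careful track of the corrections forced by the non-associativity of the normally ordered product.

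First I would record the transformation laws of the individual ingredients. Since $b_i$ and $c^i$ transform by \eqref{eq:change1}--\eqref{eq:change2} without anomalous terms, a short computation using that $g^{ij}$ is the inverse metric shows that $\sum_j g^{ij}b_j$ transforms exactly like $c^i$, whence $\tilde e^i_\pm=\sum_j\frac{\partial f^i}{\partial x^j}(\gamma)\,e^j_\pm$; similarly $\partial\tilde\gamma^l=\sum_m\frac{\partial f^l}{\partial x^m}(\gamma)\,\partial\gamma^m$. The components $\omega_{i_1\dots i_k}$, $g^{ij}$ and $\Gamma^i_{jl}$ transform as a $(0,k)$-tensor, a $(2,0)$-tensor, and an affine connection, the last one carrying the usual inhomogeneous term built from the second derivatives of the transition functions. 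I would also note that $e^j_\pm$, $\partial\gamma^l$ and every function of $\gamma$ have nonsingular operator products with any function of $\gamma$, so such functions may be commuted freely past them, and that the only nontrivial operator product among these building blocks is $e^i_\pm(z)\cdot e^j_\pm(w)\sim g^{ij}(\gamma)/(z-w)$, with only a simple pole.

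Next I would substitute these laws into the $\tilde x$-version of \eqref{eq:local-currents} and pull all the Jacobian factors out of the nested products. The leading (``classical'') contribution of the $s$-th summand reassembles into the $x$-version of the same $s$-th summand --- the purely tensorial part of the contraction --- but quasi-associativity of the normally ordered product produces corrections: whenever a derivative $\partial\bigl(\tfrac{\partial f^i}{\partial x^j}(\gamma)\bigr)=\sum_m\tfrac{\partial^2 f^i}{\partial x^j\partial x^m}(\gamma)\,\partial\gamma^m$ is generated, it gets paired against a residue $g^{\cdot\cdot}(\gamma)$ coming from an $e_\pm$--$e_\pm$ contraction, with a weight $\tfrac12$ for each contraction. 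I would then show that each such ``$\partial^2 f\cdot g^{-1}\cdot\partial\gamma$'' correction, combined with the inhomogeneous part of the transformation law of $\tilde\Gamma$, assembles into exactly one genuine factor $\sum\Gamma^i_{jl}\,g^{\cdot j}\,\partial\gamma^l$; iterating, the corrections generated by the $s$-th summand land inside the $(s{+}1)$-st summand, so all values of $s$ become coupled.

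The argument then closes by an induction on the form degree $k$ (equivalently, on the number of contractions performed), which reduces the entire identity to the recursion $T_{r,s}=T_{r-1,s}+(r+s-1)\,T_{r-1,s-1}$ satisfied by the coefficients in \eqref{eq:bessel1}; the powers of $2$ in \eqref{eq:bessel1} are precisely the factors $\tfrac12$ contributed by quasi-associativity. This combinatorial reconciliation --- bookkeeping which of the $\lfloor k/2\rfloor$ possible contractions have occurred, tracking the attendant powers of $2$, and untangling the interplay with the inhomogeneous connection term --- is the genuinely delicate part of the proof and the real reason the Bessel polynomials appear at all; the transformation laws and the reassembly above are routine once this is set up. Granting the identity, $J_\pm|_U$ and $J_\pm|_{U'}$ coincide on $U\cap U'$, and since $M$ is covered by such coordinate charts the $J_\pm$ glue to global sections $J_\pm\in C^\infty(M,\Omega^{ch}_M)$.
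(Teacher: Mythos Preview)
The paper does not prove this theorem; it is stated as ``the main technical result of \cite{heluaniholonomy}'' and cited without argument. Your outline is consonant with the strategy of that reference: check coordinate invariance by tracking the quasi-associativity corrections produced when Jacobians are pulled through the nested normally ordered products of the $e^i_\pm$, and match these against the inhomogeneous piece of the transformation law of the Christoffel symbols, the combinatorics being governed by the Bessel recursion $T_{r,s}=T_{r-1,s}+(r+s-1)\,T_{r-1,s-1}$, which you correctly identify and which indeed holds for the coefficients in \eqref{eq:bessel1}. Your identification of the two structural inputs --- the tensorial transformation $\tilde e^i_\pm=\sum_j(\partial f^i/\partial x^j)(\gamma)\,e^j_\pm$ and the single nontrivial OPE $e^i_\pm(z)\cdot e^j_\pm(w)\sim g^{ij}(\gamma)/(z-w)$ --- is accurate and is exactly what makes the mechanism run.

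That said, what you have written is a sketch rather than a proof. The step ``the corrections generated by the $s$-th summand land inside the $(s{+}1)$-st summand'' with weight exactly $(r+s-1)$ is the entire content of the theorem, and you assert it rather than carry it out: one must count, for each of the $k-2s$ surviving $e_\pm$ factors, how many ways a single contraction with a neighboring Jacobian derivative can occur, keep track of the ordering inside the nested product (the parenthesization matters), and verify that the antisymmetry of $\omega_{i_1\dots i_k}$ kills the spurious terms. Likewise, the claim that the inhomogeneous $\partial^2 f$ term from $\tilde\Gamma$ cancels precisely against the quasi-associativity correction needs the explicit comparison of signs and multiplicities. None of this is conceptually different from what you describe, but in \cite{heluaniholonomy} this bookkeeping occupies the bulk of the argument and is the reason the result is called ``the main technical result''; your proposal correctly locates the difficulty but does not yet resolve it.
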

\end{nolabel}
\begin{nolabel}\label{no:examples}
The difficulty in finding global sections of $\Omega^{ch}_M$ lies in the non-associative nature of the normally ordered product. This is why we need to be careful with the parenthesis in expressions like \eqref{eq:local-currents}. To ilustrate the Theorem above let us analyse the first few examples. For $k=1$, we are given a one form $\omega = \sum_i \omega_i dx^i$ and we are associating the global sections 
\begin{equation}\label{eq:1-forms}
\sum_{i=1}^n \omega_i(\gamma^1, \dots,\gamma^n) e_\pm^i. 
\end{equation}

For $2$-forms $\omega = \sum_{ij} \omega_{ij} dx^i \wedge dx^j$ we already see a correcting term:
\begin{equation}\label{eq:2-forms}
J_\pm = \frac{1}{2} \sum_{i,j=1}^n \omega_{ij} \Bigl( e^i_{\pm} e^j_{\pm} \Bigr) + \frac{1}{2} \sum_{i,j,k,l = 1}^n \omega_{ij} \Gamma^{i}_{kl} g^{jk} \partial \gamma^l. 
\end{equation}
\end{nolabel}
Each time we increase the degree of the form $\omega$ by two we need to insert an extra correcting term of the form $\Gamma_{kl} g^{jk} \partial \gamma^l$. The geometric meaning of the map \eqref{eq:local-currents} is unclear to the author.
\begin{nolabel}\label{no:program}
Theorem \ref{no:overcome} provides with two embeddings 
\begin{equation}\label{eq:non-commuting}
\wedge^* T^*M \hookrightarrow \Omega^{ch}_M, \qquad \omega \mapsto J_\pm. 
\end{equation}
We have already seen that the two images $J_\pm$ of a one form $\omega \in T^*M$ commute, but that is not the case with their superpartners. It follows that the above embeddings do not provide with two commuting copies of the Boson-Fermion system inside of $\Omega^{ch}_M$. 

Something special happens when we have parallel forms on $M$, it turns out that the algebra generated by the images of these forms under the above two embeddings do commute inside $\Omega^{ch}_M$ and they turn out to be precisely the list of superconformal algebras of Section \ref{sec:examples}. 
\end{nolabel}

\section{Supersymmetric structures} \label{sec:susy-cases}
\begin{nolabel}\label{no:n=2-thm}
Let $M$ be a Calabi-Yau manifold, so we have a Ricci flat metric $g$, a complex structure $\mathcal{J}$ and a Kahler form $\omega \in C^\infty(M, \wedge^{1,1} T^*M) \subset C^{\infty}(M, \wedge^2 T^*M)$ which are compatible. The Kahler form $\omega$ is parallel $\nabla \omega = 0$. We have therefore two sections $J_\pm \in C^\infty(M, \Omega^{ch}_M)$ given by \eqref{eq:2-forms}. They commute:
\begin{equation} \label{eq:commuting1}  J_+(z) \cdot J_-(w) \sim 0. \end{equation}
We also have their superpartners $\ds J_\pm$. Define $G_\pm$ by
\begin{equation} \label{eq:single-pole} (\ds J_\pm)(z) \cdot J_\pm(w) \sim \frac{G_\pm(w)}{z-w}, \end{equation}
\begin{thm*}[\cite{heluani8}]
The vectors $J_\pm$ and $G_\pm$ generate two commuting copies of the $N=2$ superconformal algebra as in \ref{no:n=2} of central charge $c = 3/2 \dim_{\mathbb{R}} M$. 
\end{thm*}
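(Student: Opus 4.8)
The plan is to reduce the whole statement to a local computation. Since $J_\pm$ are global sections by the theorem of \ref{no:overcome}, and $G_\pm$ is defined from $J_\pm$ via \eqref{eq:single-pole}, all the claimed OPEs are local identities which may be checked in a single chart; as $M$ is Kähler it is complex, so I work in a holomorphic chart with coordinates $\{z^\alpha,z^{\bar\alpha}\}$, in which $\Omega^{ch}_M$ factors as in \eqref{eq:tensor-iso} and the Kähler form is $\omega=\sqrt{-1}\sum g_{\alpha\bar\beta}\,dz^\alpha\wedge dz^{\bar\beta}$. The first step is to rewrite \eqref{eq:2-forms} for this particular $\omega$: using that for a Kähler metric the only nonzero Christoffel symbols are $\Gamma^\gamma_{\alpha\beta}=\sum_{\bar\delta}g^{\gamma\bar\delta}\partial_\alpha g_{\beta\bar\delta}$ and its conjugate, the Christoffel correction term in \eqref{eq:2-forms} is exactly what is needed to recast $J_\pm$ into the manifestly $\pm$-symmetric local form
\[ J_\pm=\sum_\alpha\Bigl(c^\alpha\pm\sum_{\bar\beta}g^{\alpha\bar\beta}b_{\bar\beta}\Bigr)\Bigl(b_\alpha\pm\sum_{\bar\beta}g_{\alpha\bar\beta}c^{\bar\beta}\Bigr) \]
of \ref{no:non-naive} (equivalently, one takes this as the definition of $J_\pm$ and checks agreement with \eqref{eq:local-currents}).

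Next I reduce the statement to finitely many OPEs. By Remark \ref{rem:generating-currents} it suffices, for each sign, to check that $J_\pm(z)\cdot J_\pm(w)\sim\frac{c/3}{(z-w)^2}$ with $c/3=\dim_{\mathbb C}M$ (so $c=3\dim_{\mathbb C}M=\tfrac32\dim_{\mathbb R}M$) and that $(\ds J_\pm)(z)\cdot J_\pm(w)$ has no pole of order $\geq2$; the latter makes $G_\pm$ in \eqref{eq:single-pole} well defined and, with $L_\pm:=\tfrac12\ds G_\pm$, the remaining $N=2$ OPEs of \ref{no:n=2} — $G_\pm$ primary of weight $3/2$, $G_\pm(z)\cdot G_\pm(w)\sim\frac{2L_\pm(w)}{z-w}+\frac{c}{(z-w)^2}$, $J_\pm$ primary of weight $1$ — are then forced, and one verifies them directly for safety. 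For the commuting statement it is enough that $J_+,\ds J_+$ have identically trivial OPE with $J_-,\ds J_-$ (four OPEs, one of which is \eqref{eq:commuting1}); since $G_\pm$ is a normally ordered polynomial in the modes of $J_\pm$ and $\ds J_\pm$, Dong's lemma (the commutant of a set is a vertex subalgebra) then promotes this to super-commutativity of the two subalgebras generated by $\{J_+,G_+\}$ and $\{J_-,G_-\}$, so that together they generate $V^+\otimes V^-$ with $V^\pm$ each the $N=2$ vertex algebra of central charge $\tfrac32\dim_{\mathbb R}M$.

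Then I carry out the computations. First one writes down $\ds J_\pm$ using \eqref{eq:super-partners}: differentiating the factors $c^\alpha\pm g^{\alpha\bar\beta}b_{\bar\beta}$ and $b_\alpha\pm g_{\alpha\bar\beta}c^{\bar\beta}$ produces, besides the expected $\beta$'s and $\partial\gamma$'s, terms quadratic in the $c$'s weighted by first derivatives of $g$. Then one expands all required OPEs by Wick's theorem for the $bc$-$\beta\gamma$ system. The single contractions reproduce, after the Kähler identities above, the right-hand sides listed in the previous paragraph; in particular $G_\pm$ is recognized as the image, under the embeddings \eqref{eq:embedding2} adapted to the complex structure, of the Boson–Fermion $N=1$ vector \eqref{eq:boson-fermion-g} on $V=T_{\mathbb C}M$, i.e. as $G_\pm$ in \eqref{eq:boson-fermion-gpm}, whose central charge is half of that of \eqref{eq:g-global-sec}, namely $\tfrac32\dim_{\mathbb R}M$.

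The main obstacle is the double contractions. These generate terms carrying one derivative of a Christoffel symbol, and the crux is to show they all cancel: after contracting indices they organize into the Ricci tensor via $R_{\alpha\bar\beta}=-\partial_{\bar\beta}\Gamma^\gamma_{\alpha\gamma}=-\partial_\alpha\partial_{\bar\beta}\log\det(g_{\gamma\bar\delta})$, and this is precisely where Ricci-flatness of $g$ enters — without it one would find exactly the anomalous term $-R_{\alpha\bar\beta}$ spoiling the higher poles of $(\ds J_\pm)(z)\cdot J_\pm(w)$ and the vanishing \eqref{eq:commuting1}, as foreshadowed by \eqref{eq:nasty1}. Carrying out and organizing this cancellation is the technical heart of the argument, done in \cite{heluani8}; everything else is formal manipulation and the bookkeeping of the correction terms.
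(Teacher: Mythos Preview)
There is a genuine gap in your identification of $G_\pm$. You assert that the single contractions in $(\ds J_\pm)(z)\cdot J_\pm(w)$ produce $G_\pm$ equal to the flat Boson--Fermion $N=1$ vector \eqref{eq:boson-fermion-gpm}. This is false when the metric is not flat: as the paper records in \eqref{eq:htildedef} and the sentence following it, $G_\pm$ carries an additional term cubic in the fermions, namely $\sum \Gamma^k_{jl}\,g^{ij}\,c^l(b_i b_k)$, and the paper states explicitly that ``these $G_\pm$ however are corrections to \eqref{eq:boson-fermion-gpm} involving the Christoffel symbols and a cubic term when the metric is not flat.'' That cubic piece arises already at the simple-pole level, from contracting the derivative-of-metric contributions in $\ds J_\pm$ (produced exactly as in \eqref{eq:super-partners}) against the fermions in $J_\pm$; it does not organize into a Ricci tensor and does not cancel. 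Without it your candidate $G_\pm$ fails the $N=1$ OPE \eqref{eq:n=1} on a curved manifold --- the computation \eqref{eq:nasty1} is precisely the obstruction showing why the naive Boson--Fermion embedding does not close.

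There is also a mismatch in the choice of coordinates. The paper (following \cite{heluani8}) does not work in an arbitrary holomorphic chart but passes to special holomorphic coordinates available on a Calabi--Yau in which the holomorphic volume form is $dz^1\wedge\cdots\wedge dz^n$, equivalently $\log\det\sqrt{g}$ is constant; only there does the Christoffel correction in \eqref{eq:2-forms} vanish and $J_\pm$ reduce to \eqref{eq:new-generators}. Your claim that in a general K\"ahler chart the Christoffel term in \eqref{eq:2-forms} is ``exactly what is needed'' to recast $J_\pm$ as the product in \ref{no:non-naive} is not what the paper does and would need separate justification. Even granting it, the resulting $G_\pm$ is still cubic. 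The actual technical heart, per the paper and Remark \ref{rem:g_pm_unknown}, is to compute this cubic $G_\pm$ in the special coordinates and then verify all the OPEs; Ricci-flatness enters at that stage, but not by simply killing every double-contraction term as you suggest.
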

Part of the theorem is to prove that in the OPE of \eqref{eq:single-pole} no higher order poles appear, therefore obtaining a well defined section $G_\pm$ by the RHS. The second point is to prove that in addition to \eqref{eq:commuting1} we have 
\begin{equation}\label{eq:commuting2}
(\ds J_\pm)(z) \cdot J_{\mp}(w) \sim 0. 
\end{equation}
This guarantees that the algebra generated by $(J_+, G_+)$ and their superpartners commutes with the algebra generated by $(J_-, G_-)$ and their superpartners. 

The point of this theorem is that we know the sections $J_\pm$ in any coordinate system and are given by the expression \eqref{eq:2-forms}. Of course if we look at special coordinate charts, for example by choosing holomorphic coordinates $\left\{ z^{\alpha} \right\}$ and antiholomorphic coordinates $\left\{ z^{\bar{\alpha}} \right\}$, we diagonalize the complex structure $\omega_{ij} g^{jk}$ appearing in the second term of \eqref{eq:2-forms}. This second term now reads in this holomorphic coordinates explicitly as:
\[ \frac{\sqrt{-1}}{2} \sum_{\alpha,\beta} \Gamma^\alpha_{\alpha \beta} \partial \gamma^\beta - \frac{\sqrt{-1}}{2} \sum_{\bar \alpha,\bar \beta} \Gamma^{\bar{\alpha}}_{\bar\alpha \bar\beta} \partial \gamma^{\bar \beta} = \frac{\sqrt{-1}}{2} \sum_{\alpha} \frac{\partial}{\partial z^\alpha} \log \det \sqrt{g} \partial \gamma^\alpha - \frac{\sqrt{-1}}{2} \sum_{\bar \alpha} \frac{\partial}{\partial z^{\bar\alpha}} \log \det \sqrt{g} \partial \gamma^{\bar \alpha}   \ \]
and since the manifold is Calabi-Yau we can cover $M$ with coordinate charts such that $\log \det \sqrt{g}$ is constant, that is, coordinate systems where the global holomorphic volume form looks like $dz^1 \wedge \dots \wedge dz^n$. In these coordinates we have that $J_\pm$ looks simply as the first term of \eqref{eq:2-forms} which in turn is 
\begin{equation} \label{eq:new-generators} J_\pm =  \frac{\sqrt{-1}}{2} \left( \sum_{\alpha} c^{\alpha} b_\alpha - \sum_{\bar\alpha}c^{\bar \alpha} b_{\bar \alpha} \pm \sum_{\alpha, \bar\beta} \left( g^{\alpha \bar \beta} b_{\alpha} b_{\bar\beta} + g_{\alpha \bar \beta} c^\alpha c^{\bar \beta} \right) \right).\end{equation}
\end{nolabel}
\begin{nolabel}
Notice that even using holomorphic coordinates such that the global holomorphic volume form of the Calabi-Yau manifold $M$ is constant, the generator \eqref{eq:new-generators} do not coincide with those of \eqref{eq:current2} coming from the identification \eqref{eq:tensor-iso}. 

The expressions for $G_\pm$ are not trivial, even in holomorphic coordinates they are cubic in the fermionic generators. We do not know of explicit expressions for $G_\pm$ on general coordinates, in the special coordinates that are holomorphic and such that the holomorphic volume form looks like $dz^1 \wedge\dots \wedge dz^n$ these sections are given by \cite{heluani8}:
\begin{equation}
		\label{eq:htildedef} 
		G_\pm = \frac{1}{2} \sum_{i} \left( b_i \partial \gamma^i + \beta_i c^i \right) \pm \frac{1}{2} \sum_{i,j,k,l} \left(
		\Gamma^k_{jl} g^{ij} c^l(b_i b_k) + 
		g^{ij} b_i \beta_j + g_{ij} c^i \partial \gamma^j \right)
\end{equation}
Note that these $G_\pm$ however are corrections to \eqref{eq:boson-fermion-gpm} involving the Christoffel symbols and a cubic term when the metric is not flat. Thus, these two commuting $N=2$ structures are indeed a curved manifestation of the structures obtained by two embeddings of a Boson-Fermion system inside of the $bc-\beta\gamma$ system and not the decomposition into holomorphic and anti-holomorphic commuting parts. 
\end{nolabel}
\begin{rem}
In \eqref{eq:htildedef} we see a typical pattern where we have local expressions for sections of $\Omega^{ch}_M$ that we only know in special coordinate systems. In fact at no point we needed to really know this expression for $G_\pm$. As mentioned in Remark \ref{rem:generating-currents}, all we need to produce the $N=2$ algebra is to construct the global section of $\Omega^{ch}_M$ that corresponds to $J$. Since $\Omega^{ch}_M$ is a sheaf of suspersymmetric algebras, we get for free another section, its superpartner $\ds J$. From the OPE of these two we get $G$ simply defined by the RHS of \eqref{eq:n=2-j-ope}. Since $\Omega^{ch}_M$ is a sheaf of vertex algebras, this section $G$ so defined is guaranteed to be a global section, being the OPE of two globally defined sections. Even though we do not know its local expression. 

The expression in \eqref{eq:htildedef} was obtained by a computation performed in holomorphic coordinates, where certain correcting term vanished due to a particular expression for the holomorphic volume form of $M$. This is why we only know that $G_\pm$ have this form on these coordinates, even though the above expressions make sense on any Riemannian manifold. This leads us to the first open problems on the topic:
\label{rem:g_pm_unknown}
\end{rem}
\begin{ques}
Is there a topological/geometrical invariant for the local sections $G_\pm$ defined by \eqref{eq:htildedef} to be globally defined sections of $\Omega^{ch}_M$?
\end{ques}
\begin{ques}
In case $G_\pm$ given by \eqref{eq:htildedef} are globally defined sections of $\Omega^{ch}_M$, do they generate two commuting copies of the $N=1$ superconformal algebra of central charge $c = 3/2 \dim_{\mathbb{R}}M$?
\end{ques}
All examples where it is known that $G_\pm$ are defined are orientable and spin manifolds. In these cases, the zero modes of $G^\pm$ can be identified with the corresponding Dirac operators on certain subspaces of forms as we will see below. 
\begin{nolabel}
In the case of $V(\fg_{super})$ constructed from a Courant algebroid over a point as in \ref{no:courant1}, the role of the Christoffel symbols is played by the structure constants of $\fg$ and the above construction for $G$ coincides with the Kac-Todorov construction.
\label{no:cubic-dirac}
\end{nolabel}
\begin{nolabel}\label{no:exemplos-dificil}
In principle an approach to answer the first question is straightforward: just perform a change of coordinates, apply \eqref{eq:change1}--\eqref{eq:change3} to each of the generators and then use quasi-associativity on the vertex algebra to collect terms. Any remaining term should give a topological/geometrical class on $M$ that should vanish for $G_\pm$ to be well defined. The second question is now simply a local computation so it is a matter of computing OPEs in a free field theory of two vectors explicitly given and there exists well established software packages to assist. 

This is essentially the approach taken in \cite{malikov} in the case of the \emph{diagonal} $N=2$. Indeed if we look at $G = G_+ + G_-$, after performing a change of coordinates $\left\{ x^i \right\} \mapsto \left\{ \tilde{x}^i \right\}$ and collecting the terms that appear from quasi-associativity in the transformation rule for \eqref{eq:change3} for the second term $\beta_i c^i$, we find that 
\[ \tilde{G} = G + \sum_i \frac{\partial}{\partial \gamma^i} \left( \mathrm{Tr} \log \left( \frac{\partial \tilde{x}^j}{\partial x^k} \right)_{jk} \right) c^i, \]
and we can recognize the first Chern class of the tangent bundle $TM$ in the RHS. 

The difficulty in trying to apply the same approach to these expressions \eqref{eq:htildedef} rely on the non-tensorial nature of the Christoffel symbols in as much as the cubic nature in the Fermions. 
Ricci-flatness was explicitly used in order to obtain the expressions \eqref{eq:htildedef}, hence it will not be a surprise if this is needed in the general case as well. 

Both of these questions exemplify the difficulties as well as the great help that having extra symmetries produce: in the $N=2$ case, the current $J$ was all we needed, and checking that the local expression for $J$, which is quadratic in the Fermions, is well defined amounts to the above computation of the Chern class. We get the existence of the fields $G$ for free. Another advantage of the supersymmetric approach is that at no point we need to compute the OPE $G(z) \cdot G(w)$.
\end{nolabel}
\begin{nolabel}\label{no:curved-topological-twist}
We proceed now to perform a topological twist in the curved situation. Let $M$ be a Calabi-Yau $2n$-manifold and consider the $C^\infty$ chiral de Rham complex $\Omega^{ch}_M$. It has two commuting $N=2$ structures of central charge $c=3n$ as in Theorem \ref{no:n=2-thm}. Let us fix a covering by special holomorphic coordinates $\left\{ z^\alpha \right\}$ as above. Notice $J_\pm$ can be written locally as 
\[ J_{\pm} =  \sum_{\bar\alpha} e_{\pm}^{\bar \alpha} e^{\pm}_{\bar \alpha} = -  \sum_{\alpha} e_\pm^\alpha e^\pm_\alpha = \]
where $e_{\pm}^{\alpha}, e_{\pm}^{\bar \alpha}$ are defined in \eqref{eq:local-e} and $e^\pm_\alpha, e^{\pm}_{\bar \alpha}$ are given by 
\begin{equation}\label{eq:locally-e2}
e^\pm_i = \frac{b_i \pm \sum_{j =1}^n g_{ij} c^j}{\sqrt{-2}}. 
\end{equation}
Of course there are relations among these generators and one can \emph{raise or lower} indices by contracting with the metric $g$ or its inverse. 

We can perform the topological twist as in \ref{no:topological-twist1} on one of the two sectors, say the ``minus'' sector and consider BRST cohomology. That is, we define $Q^-, H^-$ by 
\begin{equation}\label{eq:q-left}
\ds J_- = H^{-} - Q^-, \qquad G_- = Q^- + H^-.
\end{equation}	
These two odd generators together with $J_-$ and $T_- = L_- + \frac{1}{2} \partial J_-$ generate a topological vertex algebra. We can consider the cohomology $\mathcal{H}(\Omega^{ch}_M, Q^-_0)$, which will be concentrated in conformal weight $0$ with respect to $T_-$. Locally, all the generators $e^{\alpha}_+$, $e_{\alpha}^+$, their superpartners and derivatives commute with $T_-, J_-, Q_-$ and $H_-$. The generators $e^{\bar \alpha}_-$ have conformal weight zero and charge $+1$ while the generators $e_{\bar\alpha}^-$ have conformal weight $1$ and charge $-1$. It follows that the space of conformal weight zero for $T_-$ is generated by
\begin{enumerate}
\item all smooth functions $f(\gamma^1, \cdots,\gamma^n)$,  
\item polynomials in the fields $e^{\bar\alpha}_-$. 
\item all the ``plus'' generators $e^{\alpha}_+$, $e_{\alpha}^+$, their superpartners and their derivatives.
\end{enumerate}

Notice that a) and b) generate 
\begin{equation} \label{eq:embedding4} \wedge^* T^*_{0,1} \hookrightarrow \Omega^{ch}_M \end{equation} 
but this embedding is given in a different way than \eqref{eq:embedding1}. 
The differential $Q^-_0$ restricted to this space simply acts as $\bar\partial$, hence locally, the cohomology sheaf $\mathcal{H}(\Omega^{ch}_M, Q^-_0)$ is generated by holomorphic functions $f(\gamma^\alpha)$ and the fermions $e^\alpha_+$, $e_{\alpha}^+$. Noting that the OPE between these fields is simply given by
\begin{equation} \label{eq:1-1forms} e^\alpha_+(z) \cdot e^{\beta}(w) \sim \frac{g^{\alpha,\beta}(\gamma^1,\cdots,\gamma^n)(w)}{(z-w)} = 0, \qquad e^\alpha_+(z) \cdot e_\beta^+(w) \sim \frac{\delta^\alpha_\beta}{z-w}\end{equation}
We see that this cohomology is isomorphic to the holomorphic chiral de Rham complex $\Omega_M^{ch,hol}$. 
\end{nolabel}
\begin{nolabel}\label{no:kahler}
In the preceding section, to perform the topological twist and taking BRST cohomology we did not make use of the full $N=2$ superconformal structure, we simply needed the zero modes of the corresponding fields. These zero modes are well defined on any Kähler manifold, without the need for it to be Calabi-Yau\footnote{This is proved in the same way as in \cite{malikov} since the change of coordinates only involves derivatives of fields, it does not affect the zero modes.}. We have used the Kähler condition for example in \eqref{eq:1-1forms}. In fact, we arrive to the following
\begin{thm*}\cite{heluanimetric}
Let $M$ be a Kähler $2n$ manifold, and let $Q^-_0$ be the endomorphism of $\Omega^{ch}_M$ defined by \eqref{eq:q-left}. The cohomology sheaf
$\mathscr{H} (\Omega^{ch}_M, Q^{-}_0)$ is isomorphic to the holomorphic chiral de Rham complex $\Omega^{ch,hol}_M$ of $M$. 

If $M$ is Calabi-Yau, the $N=2$ superconformal structure generated by $J_+$, $G_+$ and their superpartners commute with $Q^-_0$ hence they survive in cohomology and they define an $N=2$ superconformal structure on $\Omega^{ch,hol}_M$ with central charge $c=3n$. 
\label{thm:kahler}
\end{thm*}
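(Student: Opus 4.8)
The plan is to promote the local analysis sketched in \ref{no:curved-topological-twist} to a statement of sheaves, using the standard device of working only with zero modes.

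First I would establish that, although the local sections $J_-$, $L_-$, $G_-$, $Q^-$, $H^-$ and $T_-$ carry Christoffel-symbol correction terms (as in \eqref{eq:2-forms} and \eqref{eq:super-partners}) and are a priori defined only on a chart of holomorphic coordinates, their zero modes $(J_-)_0$, $Q^-_0 := (Q^-)_0$, $(H^-)_0$ and $(T_-)_0$ are globally well-defined endomorphisms of $\Omega^{ch}_M$. As in \cite{malikov}, the transition functions \eqref{eq:change1}--\eqref{eq:change3} modify these local sections only by total-derivative terms and by terms built from positive products, neither of which affects a zero mode; this is the mechanism referred to in the footnote of \ref{no:kahler}. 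I would then record that the identities $(Q^-_0)^2 = 0$, $[Q^-_0,(H^-)_0] = (T_-)_0$ and $[(J_-)_0, Q^-_0] = Q^-_0$, being local, hold globally on $\Omega^{ch}_M$.

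Next, applying the homotopy argument of \ref{no:cohomology} to the quadruple $(T_-)_0$, $(J_-)_0$, $Q^-_0$, $(H^-)_0$, the cohomology sheaf $\mathscr{H}(\Omega^{ch}_M, Q^-_0)$ is concentrated in $(T_-)_0$-weight zero. On a Kähler chart with holomorphic coordinates $\{z^\alpha\}$ the weight-zero subsheaf is, as in \ref{no:curved-topological-twist}, spanned by (i) the smooth functions $f(\gamma)$, (ii) polynomials in the charge-$+1$ fields $e^{\bar\alpha}_-$, and (iii) the whole ``plus'' subsheaf generated by $e^\alpha_+$, $e^+_\alpha$, their superpartners and $\partial$-derivatives, all of which commute with $T_-$. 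On the part (i)+(ii), which is the Dolbeault sheaf $\wedge^\bullet T^*_{0,1}$ embedded via a different embedding than \eqref{eq:embedding1}, the operator $Q^-_0$ acts as $\bar\partial$; on the ``plus'' subsheaf it acts by zero. Filtering by conformal weight for $T_+$ (equivalently by polynomial degree in the ``plus'' generators), the associated graded complex is, in each degree, a direct sum of copies of the Dolbeault complex $(\wedge^\bullet T^*_{0,1}, \bar\partial)$ tensored with the ``plus'' generators, so the local $\bar\partial$-Poincaré lemma identifies $\mathscr{H}(\Omega^{ch}_M, Q^-_0)$ locally as generated by holomorphic functions $f(\gamma^\alpha)$ together with $e^\alpha_+$, $e^+_\alpha$ and their superpartners. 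By \eqref{eq:1-1forms}, in which the Kähler condition $g^{\alpha\beta}=0$ is used, these satisfy the defining OPEs of the $bc$-$\beta\gamma$ subsystem of $\Omega^{ch,hol}_M$ under $e^\alpha_+ \leftrightarrow c^\alpha$, $e^+_\alpha \leftrightarrow b_\alpha$, and $\gamma^\alpha$ playing the role of the holomorphic coordinate field, so locally $\mathscr{H}(\Omega^{ch}_M, Q^-_0) \cong \Omega^{ch,hol}_M$ as sheaves of vertex algebras. It then remains to check that this identification is compatible with the transition functions: the holomorphic analogues of \eqref{eq:change1}--\eqref{eq:change3} acting on $e^\alpha_+$ and $f(\gamma^\alpha)$ should match the action induced by the coordinate change on $\mathscr{H}(\Omega^{ch}_M, Q^-_0)$, where the globality of $Q^-_0$ from the first step is what makes the comparison meaningful.

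For the second assertion, when $M$ is Calabi-Yau, Theorem \ref{no:n=2-thm} supplies global sections $J_+$, $G_+$ satisfying \eqref{eq:commuting1} and \eqref{eq:commuting2}; hence $J_+$ and $\ds J_+$ commute with $J_-$ and $\ds J_-$, and therefore with $Q^-$, $H^-$ and $T_-$. Being $T_-$-weight-zero and $Q^-_0$-closed, $J_+$ and $G_+$ descend to sections of $\mathscr{H}(\Omega^{ch}_M, Q^-_0) \cong \Omega^{ch,hol}_M$, and since $Q^-_0$ is a derivation of the vertex algebra structure their OPEs are preserved in cohomology, so they generate the $N=2$ structure of \ref{no:n=2} with central charge $c = 3n$. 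I expect the main obstacle to be precisely the first step together with the gluing at the end of the third paragraph: one must verify that the Christoffel-correction terms in $J_-$, $Q^-$, $H^-$, $T_-$ genuinely drop out of the zero modes on overlaps — so that the whole topological-twist structure exists \emph{without} assuming Calabi-Yau — and that the resulting isomorphism of cohomology sheaves respects the nonlinear transition functions of $\Omega^{ch,hol}_M$. The $\bar\partial$-Poincaré step, although an infinite-rank statement, should be comparatively routine once the filtration by $T_+$-weight is installed.
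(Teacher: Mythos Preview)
Your proposal is correct and follows essentially the same route as the paper's argument in \ref{no:curved-topological-twist}--\ref{no:kahler}: establish that the zero modes $(J_-)_0, Q^-_0, (H^-)_0, (T_-)_0$ are globally defined on any K\"ahler manifold via the mechanism in the footnote, use the homotopy argument of \ref{no:cohomology} to concentrate in $(T_-)_0$-weight zero, identify that subspace locally with the ``plus'' sector tensored with the Dolbeault complex, apply the $\bar\partial$-Poincar\'e lemma, and read off the $bc$-$\beta\gamma$ OPEs from \eqref{eq:1-1forms} using the K\"ahler condition $g^{\alpha\beta}=0$. Your filtration by $T_+$-weight and your explicit gluing check are natural elaborations of steps the paper leaves implicit (the paper defers the full argument to \cite{heluanimetric}), but the strategy is the same.
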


Notice that we have used special coordinate systems like holomorphic coordinates only to be able to compare this sheaf with the holomorphic chiral de Rham complex. On the other hand, the currents $J_\pm$ where defined in general only using the fact that we had the corresponding two-forms at hand. The BRST differential $Q^-_0$ is well defined without reference to holomorphic coordinates by \eqref{eq:q-left}. 
\end{nolabel}
\begin{nolabel}\label{no:generalized-kahler}
In fact one can perform the above procedure of \emph{twisting} half of the chiral de Rham complex and performing BRST cohomology on any generalized Kahler manifold, this provides a definition of the \emph{holomorphic} chiral de Rham complex for such manifolds \cite{heluanimetric}. 
\end{nolabel}
\begin{nolabel}\label{no:laplacian}
The restriction of the operators $Q^-_0$, $H^-_0$ and $T^-_0$ to the space of all differential forms act as the operators $\bar{\partial}$, $\bar \partial^*$ and $\Delta_{\bar\partial}$, the OPE of $Q^-$ with $H^-$ imply the Kähler identity
\[ [Q^-_0, H^-_0] = T^-_0 \leadsto [ \bar\partial, \bar\partial^* ] = \Delta_{\bar\partial}. \]
And therefore only harmonic forms  (a finite dimensional vector space if $M$ is compact Kähler) survive in cohomology. This can be generalized to show that in the cohomology $H(\Omega^{ch}_M, Q^-_0)$ we have finite dimensional energy spaces. 
\end{nolabel}
\begin{nolabel}\label{no:other-order}
We can take cohomologies in the reverse order, namely first take global sections of $\Omega^{ch}_M$ (the higher sheaf cohomologies vanish) and then proceed to take the cohomology with respect to $Q_0^-$. Since for each conformal weight the charge with respect to $J^-_0$ is bounded, standard arguments on spectral sequences show that this cohomology converges to the sheaf cohomology $H^*(M, \Omega^{ch,hol}_M)$. In the Calabi-Yau case, this vertex algebra acquires an extra $N=2$ structure from $J_+, G_+$ and their superpartners. 

We have achieved a solution to the problem described in \ref{no:drawback2}, we started from the $C^\infty$ chiral de Rham complex of $M$ and by performing a BRST cohomology we have obtained a vertex algebra that has finite dimensional conformal weight spaces. We have done this without making use of special coordinate systems on $M$.  
We now proceed to try to apply this approach to other types of manifolds. 
\end{nolabel}
\begin{nolabel}\label{no:n=4-sym}
Let $M$ now be a Hyper-Kahler $4n$-manifold. We have at our disposal three Kähler forms $\omega^i$, $i=1,2,3$, and a metric $g$. The corresponding three complex structures $\mathcal{J}^i$ satisfy 
\[ {\cJ^i}^2 = -\id, \qquad \cJ^i \cdot \cJ^j = \varepsilon_{ijk} \mathcal{J}^k. \]
\begin{thm*}\cite{heluani8}
Let $J^i_{\pm} \in C^{\infty}(M, \Omega^{ch}_M)$ be the six sections given by \eqref{eq:2-forms} corresponding to the Kähler forms $\omega^i$. Let $G^i_\pm$ be defined by \eqref{eq:single-pole} with $J^i_\pm$ in place of $J_\pm$. Then
\begin{enumerate}
\item $G_\pm:=G^1_\pm = G^2_\pm = G^3_\pm$. 
\item $(J^1_+, J^2_+, J^3_+, G_+)$ and $(J^1_-,J^2_-,J^3_-, G_-)$ and their superpartners generate two commuting copies of the $N=4$ superconformal vertex algebra with central charge $6n$. 
\end{enumerate}
\end{thm*}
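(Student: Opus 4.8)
The plan is to reduce the whole statement to local operator-product computations in a single coordinate chart, with Theorem~\ref{no:n=2-thm} as the input. By that theorem, for each fixed $i$ the pair $(J^i_\pm,G^i_\pm)$ already generates an $N=2$ superconformal vertex algebra of central charge $c=\tfrac32\dim_{\mathbb R}M=6n$: thus $J^i_\pm$ is primary of conformal weight $1$, $J^i_\pm(z)\cdot J^i_\pm(w)\sim\tfrac{2n}{(z-w)^2}$, the single pole of $(\ds J^i_\pm)(z)\cdot J^i_\pm(w)$ is $G^i_\pm$ with no higher-order poles, and the ``plus'' and ``minus'' $N=2$'s built from the \emph{same} form $\omega^i$ commute, i.e. \eqref{eq:commuting1}--\eqref{eq:commuting2} hold with $J_\pm=J^i_\pm$. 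What is left to prove is: (a) $G^1_\pm=G^2_\pm=G^3_\pm$; (b) the off-diagonal OPEs $J^i_\pm(z)\cdot J^j_\pm(w)\sim 2\varepsilon_{ijk}\tfrac{J^k_\pm(w)}{z-w}$ and $(\ds J^i_\pm)(z)\cdot J^j_\pm(w)\sim\varepsilon_{ijk}\tfrac{(\ds J^k_\pm)(w)}{z-w}$ for $i\neq j$; and (c) the vanishing of every OPE between a field of the ``plus'' triple and a field of the ``minus'' triple.

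For (a) I would compute $(\ds J^i_\pm)_{(0)}J^i_\pm$ directly from the explicit local formulas \eqref{eq:2-forms} for $J^i_\pm$ and \eqref{eq:super-partners} for its superpartner. The ``flat'' part of this bracket -- what one gets by dropping the Christoffel correction terms, i.e. treating $\Omega^{ch}_M$ as the constant-metric Boson--Fermion system of \ref{no:boson-fermion-vs-ghosts} -- depends on $\omega^i$ only through the tensor $\omega^i g^{-1}\omega^i$, which equals $-g$ because $(\cJ^i)^2=-\id$; hence this part is manifestly independent of $i$ and reproduces the metric-only expression \eqref{eq:htildedef}. It then remains to check that the contributions of the Christoffel correction terms in \eqref{eq:2-forms} and of the metric-derivative terms in \eqref{eq:super-partners} are also independent of $i$; this is where I expect to use $\nabla\omega^i=0$ and Ricci-flatness (holonomy in $Sp(n)\subset SU(2n)$), the same geometric inputs behind the $N=2$ case. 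Since each $G^i_\pm$ is already a well-defined global section by Theorem~\ref{no:n=2-thm}, the common value $G_\pm$ is a global section as well.

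For (b), by (a) and Theorem~\ref{no:n=2-thm} these are the only OPEs of \ref{no:n=4} not yet established; the second of them follows from the first by applying the vertex-algebra derivation $\ds$, up to a short skew-symmetry bookkeeping for the $(0)$-products, so I would focus on $J^i_\pm(z)\cdot J^j_\pm(w)$. Again this is a local computation from \eqref{eq:2-forms}: its leading term is fixed by the pointwise identity $\cJ^i\cJ^j=\varepsilon_{ijk}\cJ^k$ ($i\neq j$), i.e. the antisymmetric part of $\omega^i g^{-1}\omega^j$ is $\varepsilon_{ijk}\omega^k$, and the Christoffel corrections must complete this to the stated single-pole OPE with no double pole, once more using $\nabla\omega^i=0$ and Ricci-flatness. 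Granting (a) and (b), \ref{no:n=4} and Remark~\ref{rem:generating-currents} show that $J^1_\pm,J^2_\pm,J^3_\pm$, the common $G_\pm$, and their superpartners generate a copy of the $N=4$ vertex algebra; its central charge is $6n$ since it contains each $N=2$ subalgebra with the same Virasoro vector.

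For (c), the cases $i=j$ are exactly \eqref{eq:commuting1}--\eqref{eq:commuting2}, and applying $\ds$ to them also kills $J^i_+(z)\cdot(\ds J^i_-)(w)$ and $(\ds J^i_+)(z)\cdot(\ds J^i_-)(w)$. For $i\neq j$ I would run the analogous local computation of $J^i_+(z)\cdot J^j_-(w)$ and $(\ds J^i_+)(z)\cdot J^j_-(w)$: the leading terms vanish because the two Boson--Fermion images of \ref{no:boson-fermion-vs-ghosts}, built as graphs of $g$ via \eqref{eq:embedding2}, commute, so only the Christoffel corrections need checking. Since the two triples and their superpartners generate the two $N=4$ subalgebras, their mutual commutativity follows. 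The main obstacle throughout is the one already present in the $N=1$ and $N=2$ cases: controlling the non-tensorial Christoffel correction terms of \eqref{eq:2-forms} and the metric-derivative terms produced by $\ds$, as in \eqref{eq:nasty1}, and showing that -- thanks to parallelism of all three Kähler forms together with Ricci-flatness -- they reorganize into precisely the right-hand sides above, leaving no higher-order poles. The pointwise quaternion relations $(\cJ^i)^2=-\id$, $\cJ^i\cJ^j=\varepsilon_{ijk}\cJ^k$ only govern the flat, leading-order terms.
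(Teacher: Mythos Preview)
The paper does not prove this theorem: it is stated with attribution to \cite{heluani8} and no argument is supplied, the text passing directly to the topological twist in \ref{no:topological-twist-n=4}. There is therefore nothing in the present paper to compare your proposal against.

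As an outline, your decomposition is the right one: invoke Theorem~\ref{no:n=2-thm} for each $i$ separately, then establish (a) the coincidence $G^1_\pm=G^2_\pm=G^3_\pm$, (b) the off-diagonal current OPEs \eqref{eq:n=4-ope}, and (c) the cross-sector vanishing for $i\neq j$. Your pointwise observations are also correct: the flat contribution to $G^i_\pm$ depends on $\omega^i$ only through $\omega^i g^{-1}\omega^i=-g$, and $\cJ^i\cJ^j=\varepsilon_{ijk}\cJ^k$ governs the leading term in (b). But the proposal remains a plan rather than a proof. You write ``I would compute'' and ``I expect to use'' throughout, and you explicitly name the control of the Christoffel and metric-derivative corrections as ``the main obstacle'' without resolving it; that is exactly where the substance of the cited reference lies, and nothing in the present paper fills it in for you. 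One technical point deserving more than the phrase ``short skew-symmetry bookkeeping'': applying $\ds$ to the first OPE in (b) produces $(\ds J^i)_{(n)}J^j + J^i_{(n)}(\ds J^j)$, and extracting $(\ds J^i)_{(n)}J^j$ alone from this requires an independent identification of the second summand (via skew-symmetry of the $\lambda$-bracket together with the already-established first OPE), so the two OPEs in (b) are genuinely equivalent only once both directions of that identity are written out.
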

\end{nolabel}
\begin{nolabel}\label{no:topological-twist-n=4}
In order to perform the topological twist in the Hyper-Kähler case, we choose any complex structure $\cJ$ in the sphere of complex structures of $M$ and we will obtain two corresponding $N=2$ structures $(J_\pm, G_\pm)$ inside of the above mentioned $N=4$ structures. Since the two $N=4$ structures commute, in particular the ``$+$''  structure will survive in the $Q_-^-$ BRST cohomology, therefore we obtain
\begin{thm*}
Let $\Omega^{ch,\cJ}_M$ be the holomorphic chiral de Rham complex of $M$ endowed with the complex structure $\cJ$. The cohomology $H^*(M, \Omega^{ch,\cJ}_M)$ admits an embedding of the $N=4$ superconformal algebra of central charge $c=3n$. 
\end{thm*}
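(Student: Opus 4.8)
The plan is to run the twist-and-cohomology argument that produced Theorem~\ref{thm:kahler}, but transporting the full ``$+$'' copy of $N=4$ rather than just an $N=2$ subalgebra. First I would fix the complex structure $\cJ$ in the twistor sphere; then $(M,g,\cJ)$ is Kähler and Ricci flat, hence Calabi-Yau. Let $\omega$ be its Kähler form, one of the three parallel forms $\omega^i$ of \ref{no:n=4-sym}, and let $J_\pm$ be the two sections \eqref{eq:2-forms} attached to $\omega$, with $G_\pm$ the fields \eqref{eq:single-pole}. By Theorem~\ref{no:n=4-sym} the pair $(J_\pm,G_\pm)$ sits inside the ``$\pm$'' copy of the $N=4$ vertex algebra generated by $(J^1_\pm,J^2_\pm,J^3_\pm,G_\pm)$, and the two copies commute. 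I would then define $Q^-,H^-$ by \eqref{eq:q-left} and let $Q^-_0$ be the zero mode of $Q^-$, an odd square-zero derivation of the vertex algebra $\Omega^{ch}_M$; since $M$ is Kähler, Theorem~\ref{thm:kahler} identifies the cohomology sheaf $\mathscr{H}(\Omega^{ch}_M,Q^-_0)$ with $\Omega^{ch,\cJ}_M$.

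Next I would show that the ``$+$'' $N=4$ survives this cohomology. Since the two $N=4$ copies commute, every generator of the ``$+$'' copy has trivial OPE with $J_-$ and $G_-$, hence with $Q^-$; in particular $Q^-_0$ annihilates each $J^i_+$ and $G_+$, so these global sections of $\Omega^{ch}_M$ are cocycles and descend to $\mathscr{H}(\Omega^{ch}_M,Q^-_0)$. Cohomology of a sheaf of vertex algebras with respect to the zero mode of an odd square-zero derivation is again a sheaf of vertex algebras, and the OPEs of the descended classes are the restrictions of those in $\Omega^{ch}_M$, i.e.\ the $N=4$ relations of \ref{no:n=4}. To see that the resulting morphism from the $N=4$ vertex algebra to $\mathscr{H}(\Omega^{ch}_M,Q^-_0)\simeq\Omega^{ch,\cJ}_M$ is injective, I would use the local description in \ref{no:curved-topological-twist}: in special holomorphic coordinates the ``$+$'' generators $e^\alpha_+$, $e^+_\alpha$, their superpartners and their derivatives commute with $T^-,J^-,Q^-,H^-$ and map isomorphically onto the generators of $\Omega^{ch,hol}_M$, while $J^i_+$ and $G_+$ are expressible through these up to $Q^-_0$-exact terms, so they stay nonzero and satisfy no new relations. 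This already gives an embedded $N=4$ structure on the sheaf $\Omega^{ch,\cJ}_M$.

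Finally I would pass to sheaf cohomology. One may take $H^*(M,-)$ of $\mathscr{H}(\Omega^{ch}_M,Q^-_0)\simeq\Omega^{ch,\cJ}_M$ directly, or, as in \ref{no:other-order}, first take global sections $C^\infty(M,\Omega^{ch}_M)$ (higher sheaf cohomology vanishing in the $C^\infty$ setting) and then $Q^-_0$-cohomology, the two being identified by the usual spectral sequence argument since the $J^-_0$-charge is bounded in each conformal weight. Taking $H^*(M,-)$ of a sheaf of vertex algebras produces a vertex algebra, and the $Q^-_0$-closed global sections $J^i_+, G_+\in C^\infty(M,\Omega^{ch}_M)$ of Theorem~\ref{no:n=4-sym} induce elements of $H^*(M,\Omega^{ch,\cJ}_M)$ generating an $N=4$ superconformal algebra, with central charge inherited unchanged from the untouched ``$+$'' sector (as in Theorem~\ref{no:n=4-sym}).

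The main obstacle is the injectivity claim in the second step: one must ensure that the ``$+$'' $N=4$ currents remain nonzero and independent in $Q^-_0$-cohomology, so that the result is a genuine $N=4$ structure rather than a proper quotient of it. This is exactly where the explicit computation of \ref{no:curved-topological-twist} is needed — the surviving fields are precisely the holomorphic chiral de Rham generators, onto which the ``$+$'' sector maps faithfully; everything else (the vanishing OPEs, the transport of OPEs through cohomology, and the commutation of the two orders of taking cohomology) is formal given Theorems~\ref{no:n=4-sym} and \ref{thm:kahler}.
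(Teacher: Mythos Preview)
Your proposal is correct and follows essentially the same route as the paper: the paper's argument is precisely the paragraph preceding the theorem, namely choose $\cJ$ in the sphere of complex structures, extract the $N=2$ pair $(J_\pm,G_\pm)$ sitting inside the two commuting $N=4$ structures of Theorem~\ref{no:n=4-sym}, and invoke Theorem~\ref{thm:kahler} together with the commutation of the two $N=4$ copies to conclude that the ``$+$'' copy survives the $Q^-_0$-cohomology and hence lands in $H^*(M,\Omega^{ch,\cJ}_M)$. You have simply made explicit the injectivity check and the passage from sheaf to hypercohomology that the paper leaves to the reader.
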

\end{nolabel}
\begin{nolabel}\label{no:holomorphic-symplectic}
We have obtained the above theorem \ref{no:topological-twist-n=4} by performing a BRST reduction and identifying the resulting sheaf with the holomorphic chiral de Rham complex. In this case however we have at our disposal special coordinates since we have chosen a particular complex structure $\cJ$ making $M$ into a holomorphic-symplectic manifold. 

Hence let $(M, \omega)$ be a holomorphic symplectic $4n$-manifold. Locally the holomorphic symplectic form $\omega$ is given by $\omega = \sum \omega_{\alpha \beta} dx^{\alpha} \wedge dx^\beta$ and its inverse bivector by $\omega^{-1} = \sum \omega^{\alpha \beta} \partial_{x^\alpha} \partial_{x^{\beta}}$. Consider the local sections given by 
\begin{equation}  \label{eq:global-defined}
\begin{aligned}
G &= \sum_\alpha b_\alpha \partial \gamma^\alpha + \beta_\alpha c^{\alpha}, \\ 
E &= \sum_{\alpha \beta} \omega_{\alpha\beta} c^\alpha c^\beta, \\ 
F &= \sum_{\alpha,\beta} \omega^{\alpha \beta} b_\alpha b_\beta \\
J &= \sum_\alpha c^{\alpha} b_{\alpha}. 
\end{aligned}
\end{equation} 
\begin{thm*}
Then these four sections together with their superpartners give rise to well defined global sections $H^0(M, \Omega^{ch, hol}_M)$ that generate a copy of the $N=4$ vertex algebra of central charge $c = 3n$. 
\end{thm*}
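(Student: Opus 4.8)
The plan is to follow the philosophy of Remark~\ref{rem:generating-currents}: first exhibit $J$, $E$ and $F$ as honest global sections, then let $G$ and all superpartners be \emph{forced} by operator product expansions, and finally verify the relations of \ref{no:n=4} in a chart where $\omega$ is constant, i.e.\ using the holomorphic Darboux theorem. For globality: the subsheaf of $\Omega^{ch,hol}_M$ locally generated by $\{\gamma^i,c^i\}$ is supercommutative (all $\gamma$--$\gamma$, $\gamma$--$c$, $c$--$c$ OPEs vanish) and is $\wedge^\bullet T^{*}_{1,0}M$ under the naive embedding \eqref{eq:embedding1}; dually, since the $b_i$ transform \emph{linearly} (cf.~\eqref{eq:change1}), the subsheaf locally generated by $\{\gamma^i,b_i\}$ is supercommutative and equals $\wedge^\bullet T_{1,0}M$ via $\partial_{x^{i_1}}\wedge\dots\wedge\partial_{x^{i_k}}\mapsto b_{i_1}\dots b_{i_k}$. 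As $\omega\in H^0(M,\wedge^2 T^{*}_{1,0}M)$ and its inverse bivector $\omega^{-1}\in H^0(M,\wedge^2 T_{1,0}M)$ are globally defined, the local expressions $E$ and $F$ of \eqref{eq:global-defined} glue, with no anomaly, to elements of $H^0(M,\Omega^{ch,hol}_M)$. For $J=\sum_\alpha c^\alpha b_\alpha$ I would invoke that $M$ is holomorphically orientable (a power of $\omega$ is a nowhere-vanishing holomorphic volume form), so $J$ glues as in \cite{malikov} and \ref{no:holomorphic-orientable}; equivalently $J$ is the coefficient of the simple pole of $E(z)\cdot F(w)$, hence an OPE of global sections. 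Then $G$ is the simple pole of $(\ds J)(z)\cdot J(w)$ (cf.~\eqref{eq:n=2-j-ope}), so it is global, and $\ds E,\ds F,\ds J,\ds G=2L$ are global because $\ds$ is a global endomorphism of the sheaf.

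Since operator products are coordinate-independent, I would compute them in a holomorphic Darboux chart, where $\omega_{\alpha\beta},\omega^{\alpha\beta}$ are constant and the superpartners are clean, $\ds E=2\sum\omega_{\alpha\beta}\,\partial\gamma^\alpha c^\beta$ and $\ds F=2\sum\omega^{\alpha\beta}\beta_\alpha b_\beta$ (the terms involving $d\omega$ and the Schouten bracket of $\omega^{-1}$ drop out precisely because the coefficients are now constant). Every needed OPE is then a finite Wick computation in the free $bc$--$\beta\gamma$ system: $G$ is the standard $N=1$ structure of \ref{no:beta-gamma}, whose central charge is the $c=3n$ of the statement; $J$, $E$, $F$ each have conformal weight $1$ and are $N=1$-primary, e.g.\ $G(z)\cdot E(w)\sim(\ds E)(w)/(z-w)$ with no double pole, and likewise for $F$ and $J$ (cf.~\eqref{eq:primary2}); $E(z)\cdot E(w)\sim 0$ and $F(z)\cdot F(w)\sim 0$ since no contraction among the $c$'s, respectively among the $b$'s, is available; $J(z)\cdot E(w)\sim 2E(w)/(z-w)$ and $J(z)\cdot F(w)\sim -2F(w)/(z-w)$, as $E,F$ carry $J_0$-charge $\pm2$; and $E(z)\cdot F(w)\sim \kappa/(z-w)^2+2J(w)/(z-w)$ with $\kappa=\pm\sum_{\alpha\beta}\omega_{\alpha\beta}\omega^{\alpha\beta}=\pm\dim_{\mathbb C}M$ by the inverse-matrix identity, which pins down $c=3n$. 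Passing to the orthogonal recombination $J_1,J_2,J_3$ of $\{E,F,J\}$ (the $\sl_2\cong\so_3$ change of basis) turns these into $J_a(z)\cdot J_a(w)\sim(c/3)/(z-w)^2$ and \eqref{eq:n=4-ope}; applying $\ds$, together with the mixed OPEs $(\ds J_a)(z)\cdot J_b(w)$ computed directly (which, as \ref{no:susy-generators} warns, are not implied by the $J_a$-OPEs alone), produces the remaining $N=4$ relations, and $(\ds J_a)(z)\cdot J_a(w)\sim G(w)/(z-w)$ recovers $G$. Hence $J_1,J_2,J_3,G$ and their superpartners generate a copy of the $N=4$ vertex algebra of central charge $c=3n$ inside $\Omega^{ch,hol}_M$, proving the theorem.

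The genuine work is concentrated in this last step: the bookkeeping of the mixed current/superpartner OPEs, and, above all, making sure that no anomalous term survives in the gluing of $J$ and $G$ — the same phenomenon that produced the first Chern class in the computation recalled in \ref{no:exemplos-dificil}. Holomorphic orientability, automatic for a holomorphic symplectic manifold, is exactly what kills that obstruction, and the holomorphic Darboux theorem is what removes the $d\omega$ and Schouten-bracket corrections that would otherwise clutter $\ds E$ and $\ds F$. Note that, unlike \ref{no:n=2-thm}--\ref{no:n=4-sym}, no metric or Ricci-flatness hypothesis enters: the expressions \eqref{eq:global-defined} contain no Christoffel symbols.
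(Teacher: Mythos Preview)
Your proposal is correct and follows essentially the same route as the paper's proof: $E$ and $F$ are global via the naive embeddings of holomorphic forms and polyvector fields, $J$ and $G$ are global because a holomorphic symplectic manifold is Calabi--Yau (the paper cites \cite{malikov} directly for both; you add the observation, in the spirit of Remark~\ref{rem:generating-currents}, that they can also be read off from OPEs of the already-global $E$, $F$ and $\ds J$), and the $N=4$ relations are then a Wick computation in holomorphic Darboux coordinates where $\omega_{\alpha\beta}$ is constant. Your write-up is more detailed than the paper's, but the strategy is identical.
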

\begin{proof}
The fact that $E$ and $F$ give rise to well defined sections is obvious on any manifold and corresponds to the embedding \eqref{eq:embedding1} and its dual. The fact that $J$ and $G$ are well defined sections giving rise to an $N=2$ superconformal structure of central charge $c=6n$ was proved in \cite{malikov} and uses the fact that $M$ is Calabi-Yau. We can perform the computation in Darboux coordinates where the $\omega_{\alpha\beta}$ and $\omega^{\alpha\beta}$ are constant, in which case the superpartners of the above fields are simply given by
\begin{equation}\label{eq:superpartnersn=4}
\begin{aligned}
\ds G = 2 L &= 2 \sum \beta_\alpha \partial\gamma^\alpha + \sum_\alpha b_\alpha \partial c^\alpha - \sum_\alpha c^\alpha  \partial b_\alpha, \\
\ds E &= - 2 \sum_{\alpha \beta} \omega_{\alpha \beta} c^\alpha \partial \gamma^\beta, \\
\ds F&= -2 \sum_{\alpha \beta} \omega^{\alpha\beta} b_\alpha \beta_\beta \\ 
\ds J&= \sum_\alpha b_\alpha \partial \gamma^\alpha - \sum_\alpha \beta_\alpha c^\alpha
\end{aligned}
\end{equation}
In these coordinates the computation is standard to check the OPE of the $N=4$ vertex algebra. 
\end{proof}
\end{nolabel}
\begin{nolabel}\label{no:remark-n=4}
As in the $N=2$ case, the expressions in \eqref{eq:global-defined}--\eqref{eq:superpartnersn=4} are holomorphic and therefore $C^\infty$-sections of $\Omega^{ch}_M$. Under this embedding \eqref{eq:embedding1} of the holomorphic chiral de Rham complex into the $C^\infty$-one they do not correspond to the $N=4$ structure described by Theorem \ref{no:topological-twist-n=4}. 
\end{nolabel}
\begin{nolabel}\label{no:bailin}
In the case of $M$ being a $K3$ surface, the above generators were used by B. Song to prove \cite{song1, song2} that the space of global sections $H^0(M, \Omega_M^{ch, hol})$ equals the irreducible quotient of the $N=4$ vertex algebra at central charge $c=6$. This also follows from the expansion of the elliptic genus as characters of the $N=4$ vertex algebra \cite{eguchi-hikami} by noting that the highest weights appearing in the decomposition do not appear in the Verma module for $\mathfrak{psl}(2|2)$ at level $-2$ \cite{kacwakimotomock2} (see section \ref{sec:hamiltonian}). 
\end{nolabel}
\begin{nolabel}\label{no:G2-lazaro}
Let now $M$ be a $G_2$ manifold. That is a smooth $7$-manifold with a metric $g$ with holonomy $G_2$. There is a three form $\phi \in C^\infty(M, \wedge^3 T^*M)$ that is covariantly constant with respect to the Levi-Civita connection. In fact this form determines the metric $g$ up to a conformal factor by 
\[ g(u,v) \wedge dvol_g = (u\llcorner \phi)\wedge (v \llcorner \phi) \wedge \phi \]
where $u,v$ are vector fields and $dvol_g$ is the volume form determined by $g$. The holonomy group of the metric $g$ thus determined by $\phi$ is included in $G_2$ if and only if $d \phi = 0$ and $d \star \phi = 0$. Note that this last equation is highly non-linear as the Hodge $\star$ operator depends on $g$ which in turn is determined by $\phi$. 

Theorem \ref{no:overcome} produces two sections $\Phi_\pm$ associated to $\phi$. The expressions for these sections is highly non-trivial as we have seen we need to include the correcting terms as in Theorem \ref{no:overcome}. In this situation we have Rodríguez's theorem 
\begin{thm}[\cite{lazaro-g2}]
The two sections $\Phi_\pm$ generate two commuting copies of the superconformal $SVG_2$ algebra of central charge $c = 21/2$ in $\Omega^{ch}_M$ in the sense of Remark \ref{rem:generating-currents}. 
\label{thm:lazaro}
\end{thm}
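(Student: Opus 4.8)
Since by Theorem \ref{no:overcome} the sections $\Phi_\pm$ are global and $\Omega^{ch}_M$ is a sheaf of supersymmetric vertex algebras, every positive product of $\Phi_+,\Phi_-$ and their superpartners $K_\pm:=\ds\Phi_\pm$ is again a global section, so the assertion is local and may be checked in a single coordinate chart. The plan is to use Remark \ref{rem:generating-currents}: $SVG_2$ is generated, allowing positive products, by the pair $(\Phi,K=\ds\Phi)$, with $X,G,L=\tfrac12\ds G,M=\ds X$ recovered from the first two lines of \eqref{eq:svg2}. Thus it suffices to establish, in a chart: (i) that $\Phi_\pm(z)\cdot\Phi_\pm(w)$ has precisely the pole structure of the first line of \eqref{eq:svg2} (the double pole vanishing, the triple pole being $-7$), which lets one set $X_\pm:=\tfrac16(\Phi_\pm)_{(0)}\Phi_\pm$; (ii) that $\Phi_\pm(z)\cdot K_\pm(w)$ has the form of the second line, which reads off $G_\pm$ from the double pole and is consistent with $M_\pm=\ds X_\pm$; (iii) the remaining structural OPEs of \eqref{eq:svg2}, i.e.\ that $G_\pm$ is an $N=1$ vector of central charge $c=\tfrac{3\cdot 7}{2}=\tfrac{21}{2}$ with $\Phi_\pm$ primary of weight $3/2$, together with the $LX$, $XX$, $\Phi X$, $\Phi M$ and $XM$ OPEs; and (iv) that $\Phi_+(z)\cdot\Phi_-(w)\sim 0$ and $K_+(z)\cdot\Phi_-(w)\sim 0$, from which, applying $\ds$, all cross OPEs between the ``$+$'' and the ``$-$'' families vanish and the two copies commute.

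Next I would fix a point $p\in M$ and work in Riemannian normal coordinates at $p$, so that $g_{ij}(p)=\delta_{ij}$ and $\Gamma^k_{ij}(p)=0$. At $p$ the Christoffel correction in \eqref{eq:local-currents} drops out and $\Phi_\pm$ becomes the free-field cubic $\tfrac{1}{3!}\sum\phi_{ijk}(p)\,e_\pm^i\bigl(e_\pm^j e_\pm^k\bigr)$ in the two free-fermion families $e_\pm^i$ of \eqref{eq:local-e}, which have vanishing singular OPE with one another. The leading part of each OPE in (i)--(iv) is then a purely combinatorial free-fermion computation with $\phi$ the standard $G_2$-invariant $3$-form on $\mathbb R^7$; its closure on the $SVG_2$ relations is exactly the statement $\mathrm{Stab}(\phi)=G_2$, in the sense that Wick contraction produces the $G_2$ identities $\phi_{ijk}\phi_{ljk}=6\delta_{il}$, $\phi_{ijp}\phi_{klp}=\delta_{ik}\delta_{jl}-\delta_{il}\delta_{jk}-\psi_{ijkl}$ (with $\psi=\star\phi$, in suitable conventions), together with the further $\phi$--$\psi$ and $\psi$--$\psi$ contraction identities, which assemble into the curvature-free right-hand sides of \eqref{eq:svg2}; here $X_\pm$ is the free-fermion realisation of $\star\phi$ up to a lower-weight piece involving the free $G_\pm$. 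This step is in essence the original Shatashvili--Vafa flat-model computation recast vertex-algebraically, and I would carry it out once, tracking normal-ordering rearrangements carefully, since the non-associativity of the product is precisely why the parenthesisation in \eqref{eq:local-currents} must be respected.

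The main obstacle is controlling the remaining contributions to these OPEs, namely those coming from the first and second derivatives of $g$ at $p$ — equivalently from $\partial\Gamma(p)$, i.e.\ the Riemann tensor — and from $\partial\phi(p)$, which combines with the Christoffel correction into $\nabla\phi$. The $\nabla\phi$-terms vanish identically because $\phi$ is parallel; this is where the hypothesis $\mathrm{Hol}(g)\subseteq G_2$ (equivalently $d\phi=0=d\star\phi$) is used. The curvature terms are then killed by two features of $G_2$-holonomy: the metric is Ricci-flat, removing all Ricci contractions, and the curvature tensor obeys the $G_2$ curvature identities (its contractions with $\phi$ lie in the appropriate $G_2$-submodule), so that the surviving curvature terms — each contracted against $\phi$ or $\star\phi$ — cancel. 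Showing this cancellation is exact, leaving no residual curvature-dependent correction to any generator or to the central term, is the crux; by analogy with the $N=2$ computation behind \eqref{eq:htildedef} and the discussion in \ref{no:exemplos-dificil}, the special holonomy is expected to be indispensable. Step (iv) is delicate for the same reason: $\Phi_+(z)\Phi_-(w)\sim 0$ is immediate (neither section contains the field $\beta_i$, and the two $e_\pm$ families are mutually non-singular), but $K_+=\ds\Phi_+$ does contain $\beta_i=\ds b_i$, which pairs singularly with the functions $\phi_{jkl}(\gamma)$, $g^{jk}(\gamma)$ and $\Gamma^j_{kl}(\gamma)$ appearing in $\Phi_-$; proving that these contributions organise — again via $\nabla\phi=0$ and the $G_2$ identities — into an identically vanishing OPE is the substantive part of the commuting-sectors claim. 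Once (i)--(iv) hold, Remark \ref{rem:generating-currents} promotes them to the asserted isomorphism with two commuting copies of $SVG_2$ of central charge $21/2$.
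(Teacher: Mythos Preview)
Your outline is structurally correct and matches what little the paper says about the proof: the paper does not prove this theorem but cites \cite{lazaro-g2} and describes the argument there as ``a monumental and technically difficult computer-assisted tour de force, where all the known algebraic and geometric identities between the Christoffel symbols, the metric and the three form $\phi$ on $M$ were needed'', including some from \cite{corti}. The reduction to a local computation via Remark~\ref{rem:generating-currents}, the identification of the flat piece with the Shatashvili--Vafa free-fermion model, and the organisation of the remainder into $\nabla\phi$-terms (killed by parallelism) and curvature terms (to be killed by Ricci-flatness plus $G_2$ curvature identities) are all right.

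The gap is that what you call ``the crux'' is the entire theorem, and you have not supplied it. Naming ``$G_2$ curvature identities'' in the abstract is not enough: the cited proof needed the full battery of contraction identities among $\phi$, $\psi=\star\phi$, the metric and the curvature tensor (in particular that $R$ takes values in $\mathfrak g_2\subset\Lambda^2_{14}$, so $R_{ijkl}\phi^{klm}=0$, together with its iterated consequences), and matching these identities term-by-term to the quasi-associativity corrections of the normally ordered products is precisely the computer-assisted step. A proposal should at minimum enumerate the OPEs that must be checked beyond your (i) and (ii) --- note that closing the algebra requires the $XX$, $\Phi M$, $XM$ relations in \eqref{eq:svg2}, which involve the composite fields $G\cdot X$ and $G\cdot\Phi$ and are not formal consequences of (i)--(ii) alone --- and indicate which curvature identity kills each residual term.

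One technical caution on your normal-coordinate strategy: ``evaluate the coefficient functions at $p$'' is not a vertex-algebra homomorphism, because the $\beta$--$\gamma$ contractions that arise in $K_\pm\cdot(\text{anything})$ produce \emph{derivatives} of the coefficient functions before any evaluation. So you must compute the OPE in the chart keeping all of $g,\phi,\Gamma$ and their derivatives symbolic, and only then set $\gamma=p$. In particular the OPEs needed for (iii) and for $K_+\cdot\Phi_-$ in (iv) will generate $\partial\Gamma$ (curvature) and, for the higher-weight relations, $\partial^2\Gamma$ (covariant derivatives of curvature); your bookkeeping ``first and second derivatives of $g$'' understates this. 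This is consistent with the paper's remark that no $G_2$-adapted coordinate system is available to short-circuit the computation.
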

The proof of this theorem is a monumental and technically difficult computer-assisted tour de Force, where all the known algebraic and geometric identities between the Christoffel symbols, the metric and the three form $\phi$ on $M$ were needed. Including some fairly recent ones  \cite{corti}. 

A major difference between this situation and the Calabi-Yau/Hyper-Kahler situation is that there are no known \emph{good} coordinate systems adapted to $G_2$ manifolds like the holomorphic coordinates in the $N=2$ case. There is no analog of the local isomorphism \eqref{eq:tensor-iso}, hence we only have access to the local splitting into ``plus'' and ``minus'' Boson-Fermion systems as in \ref{no:boson-fermion-vs-ghosts}. 
\end{nolabel}
\begin{nolabel}\label{no:g2-topological-twist}
In the $G_2$ case, the vertex algebra of global sections $C^\infty(M, \Omega^{ch}_M)$ is too large, it has infinite dimensional conformal weight spaces. We would want to perform a half-twist as we did in the previous cases of $N=2$ and $N=4$ and obtain a vertex algebra with finite dimensional conformal spaces and possibly consider its character as an invariant of $M$, study its convergence and modular properties, etc. 

We have thus three problems at hand. The first problem is how to define a differential analogous to $Q^-_0$ such that its cohomology sheaf $\cH(\Omega^{ch}_M, Q^-_0)$ has finite dimensional conformal weight spaces? Analogously we would want that the cohomology with respect to $Q^-_0$ of the vertex algebra $C^{\infty}(M, \Omega^{ch}_M)$ have finite dimensional conformal weight spaces. 

The second problem is to study its character, this would be the analog of the elliptic genus for a $G_2$ manifold. 

The third problem and perhaps the most important in connection to Moonshine phenomena is to study the representation of $SVG_2$ in this vertex algebra. We would require that one of the two $SVG_2$ algebras produced by Rodriguez's theorem comutes with $Q^-_0$ so that it survives in cohomology. 

The first problem has been treated in the literature before and some progress has been done towards a topological twist in this setting. The main problem is that we do not have at our disposal any conformal weight $1$ fields to either change the Virasoro vector from central charge $c = 21/2$ to one of central charge $0$ (this was done by adding a multiple of $\partial J$ in the $N=2$ and $N=4$ cases) nor an odd vector of conformal weight $1$ with zero self-OPE to consider its zero mode as differential. It has been suggested in \cite{boer}, refining an idea of \cite{vafa}, to use the zero mode of an intertwining operator between Virasoro-modules as a BRST operator $Q^-_0$. 

The fields $\tfrac{-1}{5} X_\pm$ provide with two extra Virasoro fields of central charge $7/2$. It turns out that the original Virasoro fields $L_\pm = \tfrac{1}{2} \ds G_\pm$ decompose as sums of two commuting Virasoro fields $L_\pm = T_\pm + X_\pm$. We have thus four commuting Virasoro fields $(T_\pm, X_\pm)$. The same is not true for the corresponding $N=1$ structures which do not commute. The central charge of $T_\pm$ is $98/10 = 21/2 - 7/10$.   

The Virasoro algebra at central charge $c=7/10$ is one of the \emph{minimal models} for the Virasoro algebra. It is called the \emph{tri-critical Ising model} and it is a rational vertex operator algebra. Its list of simple modules is parametrized by the conformal weight of the cyclic vector. There are four irreducible modules with minimal conformal weights $0, \tfrac{1}{10}, \tfrac{6}{10}$ and $\tfrac{3}{2}$ and two Ramond-twisted irreducible modules with minimal weights $7/16$ and $3/80$. The fusion ring of this algebra is well known. As an example if we order the list of simple modules as above, the operation of tensoring with the module of minimal weight $1/10$ produces the sum of the module to the right plus the module to the left.  We now consider the sheaf of vertex operator algebras (or its global sections) $\Omega^{ch}_M$ as a $Vir_{7/2}\otimes Vir_{98/10}$-module (with respect to $(X_-, T_-)$). The vector $G_-$ has conformal weight $1/10$ with respect to $Vir_{7/10}$ as we can see from the third equation in \eqref{eq:svg2}. It follows that its field $G_-(z)$, when restricted to an irreducible $Vir_{7/10}$ submodule of $\Omega^{ch}_M$ will be the sum of two different intertwining operators $G_-(z) = H_-(z)+ Q_-(z)$, the first one intertwines to the left, and the second one to the right in the list of irreducibe $Vir_{7/10}$ modules. Taking the zero modes of $Q^{-}(z)$ we obtain an odd endomorphism of $\Omega^{ch}_M$ that squares to zero and such that the whole $SVG_2$ algebra generated by $\Phi^+$ commutes with it. It follows that $H(\Omega^{ch}_M, Q^-_0)$ is a vertex algebra that contains a $SVG_2$ subalgebra.  

The above discussion is only possible if the $Vir_{7/10}$ generated by $X$ is simple, that is, is a member of the minimal series. This leads us to 
\begin{ques}\label{ques:simpleg2}
Are the algebras $SVG_2 \subset \Omega^{ch}_M$ produced by Rodriguez theorem simple vertex algebras?
\end{ques}
The supersymmetry algebras $SVG_2$ can be produced by Hamiltonian reduction and as such they can be reduced from the universal affine Kac-Moody Lie algebras or their irreducible quotients. At this time we do not know if the copy of the $SVG_2$ algebra inside $\Omega^{ch}_M$ corresponds to the irreducible quotient or not. The question above, in case of the affirmative, would be an analog to Song's claim in the $N=4$ case of the K3 surface. It will also provide a unitary $Vir_{7/10}$ subalgebra generated by $X$ and hence we can apply the program of \cite{boer} to perform the topological twist. 

The restriction of $Q^-_0$ to the subspace of $\Omega^{ch}_M$ generated by $C^\infty(M)$ and the local one-forms $e^i_-$ is given as follows. One decomposes the space of all smooth forms $\wedge^* T^*M$ in irreducible representations of $G_2$, this decomposition is given by
\[ 
\begin{aligned}
\wedge^0 T^* M &= \Lambda^0_1, &&& \wedge^1 T^*M &= \Lambda^1_7, \\ 
\wedge^2 T^* M &= \Lambda^2_7 \oplus \Lambda^2_{14}, &&& \wedge^3 T^*_M &= \Lambda^3_1 \oplus \Lambda^3_7 \oplus \Lambda^3_{27}
\end{aligned}
\] 
where the subindex denotes the dimension of the irreducible $G_2$-module. The remaining spaces are obtained by applying the Hodge $\star$-involution, that is $\star \Lambda^n_m = \Lambda^{7-n}_{m}$. Restricting the standard de Rham complex by projecting to the $7$ and $1$ dimensional representations one obtains a complex:
\begin{equation} \label{eq:g2-derham}  0 \rightarrow \Lambda^0_1 \rightarrow \Lambda^1_7 \rightarrow \Lambda^2_7 \rightarrow \Lambda^3_1 \rightarrow 0. 
\end{equation} 
And an analogous one for higher forms. This complex coincides with the restriction of $Q^-_0$ to the space of differential forms. 

Unlike in the $N=2$ case \ref{no:laplacian}, we do not have at our disposition the operator $T^-_0$ as the zero mode of a Virasoro of central charge $0$. We however can consider the commutator of the zero modes $Q^-_0$, $H^-_0$ of the intertwining operators defined above. This operator, when restricted to the space of differential forms consists of a second order differential operator and coincides with the Laplacian (as one can check that $H^-_-$ coincides with $d^*$ in this case). Note that the embedding of forms into $\Omega^{ch}_M$ that we need to use is not given by \eqref{eq:embedding1} but rather the one obtained by Theorem \ref{no:overcome} in the ``minus''  sector. 

This in turn implies that the cohomology $H(\Omega^{ch}_M, Q^-_0)$ will have finite dimensional energy spaces.  It follows that if the answer to the question \ref{ques:simpleg2} is affirmative, we can attach the formal series
\begin{equation} \label{eq:g2formal}  \chi_M(\tau) = q^{-21/48} \mathrm{str}_{H(\Omega^{ch}_M, Q^-_0)} q^{L_0^+}, \qquad q = e^{2 \pi i  \tau}, \end{equation}
and ask if this series converges and has modular properties. 

Note that since we have two commuting Virasoros: $T^+, X^+$ we may promote the above function to a two variable trace by computing the joint eigenspaces of $T^+_0$ and $X^+_0$. 
\end{nolabel}

\begin{nolabel}\label{no:spin-7-symmetry}
Let us analyze now the case when the manifold $M$ has holonomy $Spin_7$. These are $8$ dimensional manifolds endowed with a four form $\psi$ which is parallel with respect to the Levi-Civita connection. In this case we conjectured 
\begin{conj*}[\cite{heluaniholonomy}] The vertex algebra $C^\infty(M, \Omega^{ch}_M)$ carries two global sections $X^\pm$ generating two commuting copies of  $SVSpin_7$ of central charge $c=12$ as in \ref{no:Spin7} in the sense of Remark \ref{rem:generating-currents}. 
\end{conj*}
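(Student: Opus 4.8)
The plan is to mirror the proofs of Theorem \ref{no:n=2-thm} (the $N=2$ case), the $N=4$ theorem of \ref{no:n=4-sym}, and, most closely, Rodríguez's Theorem \ref{thm:lazaro} in the $G_2$ case. Since $\psi\in C^\infty(M,\wedge^4T^*M)$ is a global smooth four-form, Theorem \ref{no:overcome} applies directly: it produces two global sections $X^\pm\in C^\infty(M,\Omega^{ch}_M)$, given locally by \eqref{eq:local-currents} with $k=4$ — so now with up to two Christoffel correction terms, as the sum over $s$ there runs from $0$ to $2$. With respect to the local superconformal vector \eqref{eq:g-global-sec}, each $X^\pm$ has conformal weight $2$, matching the weight of the generator $X$ of $SVSpin_7$ in \ref{no:Spin7}. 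Because global well-definedness is already handed to us by Theorem \ref{no:overcome}, and because $\Omega^{ch}_M$ is a \emph{sheaf} of supersymmetric vertex algebras, the superpartners $\ds X^\pm$ and every OPE coefficient built from $X^\pm$ and $\ds X^\pm$ are automatically global sections; in particular the candidate $N=1$ field $G^\pm$, extracted from the OPE of $X^\pm$ with $\ds X^\pm$ exactly as in Remark \ref{rem:generating-currents}, is global for free. Thus, as in Remark \ref{rem:generating-currents}, nothing remains to be shown about the \emph{existence} of sections: the conjecture reduces entirely to an OPE computation.

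I would carry out that computation locally in a single chart, using the freedom to choose geodesic normal coordinates centered at an arbitrary point, so that the Christoffel symbols vanish there while their first derivatives recover the Riemann tensor. Expressing $X^\pm$ via \eqref{eq:local-currents} in terms of the weight-$\tfrac12$ generators $e^i_\pm$ of \eqref{eq:local-e} and their $\ds$-images, one computes $X^\pm(z)\cdot X^\pm(w)$ and $(\ds X^\pm)(z)\cdot X^\pm(w)$ by repeated use of the non-commutative Wick formula and quasi-associativity (the superfield $\lambda$-bracket calculus of \cite{heluani3} would roughly halve the bookkeeping), reads off $G^\pm$, sets $L^\pm=\tfrac12\ds G^\pm$, and then checks the four families of OPEs in \eqref{eq:svspin7}, including the non-linear term $G^\pm\cdot X^\pm$ in the last two. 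A useful partial check along the way is that $X^\pm$ by itself should generate a Virasoro of central charge $1/2$, as noted in \ref{no:Spin7}.

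The geometric input is the point. The $c=12$ term, i.e.\ the fourth-order pole $16/(z-w)^4$ in the $X\cdot X$ OPE, should emerge directly from the norm contraction $\psi_{abcd}\psi^{abcd}$; the terms $16\,X(w)/(z-w)^2$ and $8\,\partial_wX(w)/(z-w)$, which are exactly what force the algebra to close \emph{onto itself}, will require the quadratic contraction identities for the $Spin_7$ four-form — the reduction of $\psi_{ab}{}^{cd}\psi_{cdef}$ and its further contractions to polynomials in $g$ and $\psi$ — the $Spin_7$ analogues of the $G_2$ identities used in \cite{lazaro-g2} and \cite{corti}. Parallelism $\nabla\psi=0$ is what makes the ``naive'' leading terms transform correctly between charts (this is already inside Theorem \ref{no:overcome}), and Ricci-flatness, automatic for holonomy $Spin_7$, will very likely be needed to cancel the curvature-dependent corrections, just as it was for the $N=2$ expressions \eqref{eq:htildedef}.

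Finally, the commuting statement is the assertion $X^+(z)\cdot X^-(w)\sim 0$ and $(\ds X^+)(z)\cdot X^-(w)\sim 0$, the $Spin_7$ analogue of \eqref{eq:commuting1}--\eqref{eq:commuting2}. The starting observation is that the generators $e^i_+$ and $e^j_-$ of the two Boson--Fermion subsystems of \ref{no:boson-fermion-vs-ghosts} have vanishing mutual OPE in the flat model; in the curved setting their superpartners acquire Christoffel corrections, and the content of the conjecture is that, for the particular combinations dictated by the parallel form $\psi$, all the resulting cross terms cancel — the same mechanism already observed in the $N=2$ and $N=4$ cases, again a consequence of $\nabla\psi=0$ together with the contraction identities. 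The main obstacle is therefore not conceptual but computational: this is a longer and heavier version of the $G_2$ \emph{tour de force} of \cite{lazaro-g2} — degree four rather than three, hence more correction terms in \eqref{eq:local-currents} and higher-degree fermionic monomials — and it depends on having the complete list of $Spin_7$-invariant contraction identities at hand; supplying the ones not already recorded in the literature (the role \cite{corti} played for $G_2$) is likely to be the real work, after which the OPE verification, though enormous, is in principle a finite, computer-assisted check in an explicit free-field realization.
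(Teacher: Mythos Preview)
The statement you are trying to prove is labeled a \emph{Conjecture} in the paper, and the paper gives no proof: it explicitly says in \ref{no:other-hol} that ``at this time, the author does not know of any method that is not a lengthy direct computation in order to prove Conjecture \ref{no:spin-7-symmetry}.'' The paper only records that the conjecture is verified when $Hol(M)$ is properly contained in $Spin_7$ (products involving $K3$, a $G_2$ manifold times $S^1$, a Calabi--Yau four-fold, the flat case), and then proceeds \emph{conditionally} on the conjecture to discuss the topological twist.

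Your proposal is therefore not to be compared against a proof in the paper --- there is none --- but it is worth noting that your strategy is precisely the ``lengthy direct computation'' the author alludes to as the only known route: construct $X^\pm$ via Theorem \ref{no:overcome}, read off $G^\pm$ from the OPE as in Remark \ref{rem:generating-currents}, and verify \eqref{eq:svspin7} and the cross-vanishings by a computer-assisted free-field calculation in normal coordinates, feeding in the $Spin_7$ contraction identities for $\psi$. You have correctly identified that global existence is free from Theorem \ref{no:overcome} and that the entire content is the local OPE check; you have also correctly flagged the likely bottleneck, namely assembling the full list of $Spin_7$ contraction identities analogous to the $G_2$ ones from \cite{corti} used in \cite{lazaro-g2}. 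What you have written is a sound and well-informed \emph{plan}, but it is not a proof: the actual verification --- including whether the curvature corrections really cancel using only $\nabla\psi=0$ and Ricci-flatness, and whether the cross-sector OPEs truly vanish --- remains to be carried out, and the paper regards this as open.
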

This conjecture is verified in cases when the holonomy group of $M$ is properly included in $Spin_7$ this is for example the case of a $K3$-surface times $T^4$ (or $\mathbb{R}^4$ for a non-compact case), the product of two $K3$ surfaces, a Calabi-Yau $3$-fold  times $T^2$, a  $G_2$ manifold times an $S^1$, a Calabi-Yau four-fold, and of course when $M$ is flat (in which case it corresponds to the original work \cite{vafa}). 

The advantage we have in this situation over the $G_2$ case is that we have a well defined elliptic genus for $M$ \cite{hirzebruch-berger} which is a modular form for the congruence subgroup of $SL(2, \mathbb{Z})$ generated by $\tau \mapsto \tau +2$ and $\tau \mapsto -1/\tau$. In \cite{benjamin-harrison} (see also \cite{cheng-harrison}) a list of characters of unitary representations of $SPSpin_7$ was conjectured and a decomposition of the elliptic genus of $M$ as linear combinations of these characters was studied along the lines of \cite{eguchi} in the $K3$ case. The proposed characters of $SVSpin_7$ are Mock modular forms and therefore the generating function for the multiplicity spaces is a Mock modular form canonically attached to any $Spin_7$ manifold $M$. 

It should be possible to carry out rigourously this program within the context of the chiral de Rham complex of $M$ as in the $K3$ or Calabi-Yau case. The remainder of this section is conjectural and depends strongly on the existence of the two commuting copies of $SVSpin_7$ inside of $\Omega^{ch}_M$. 

The first step would be to prove the conjecture above. Assuming that we have two commuting copies of $SVSpin_7$ inside of $C^\infty(M, \Omega^{ch}_M)$ we want to obtain the elliptic genus of $M$ as some form of graded dimension of this vertex algebra. There are two possible approaches to this. On the one hand one could try to make sense of expressions as the ones appearing the physics literature like 
\begin{equation} \label{eq:elliptic-1} Z(\tau) = \tr_{C^{\infty}(M, \Omega_M^{ch})} (-1)^F q^{L_0^+ - c/24} \bar{q}^{L^-_0 - c/24}. \end{equation}
In the $Spin_7$ case assuming the conjecture we have at hand the two operators $L^\pm_0$. The global fermion number $F$ is also well defined as the zero mode of \eqref{eq:current} which is well defined on any orientable manifold. In fact, the zero mode is well defined on any manifold as noted in \cite{malikov}.  To decide whether the expression \eqref{eq:elliptic-1} defines a well defined function or not one needs to study the analytic properties of the operators $L^\pm_{0}$. In the Kähler case this can be formalized since both operators can be constructed as commutators of the fermions $Q^\pm$ and $H^\pm$ and by the K\"ahler identities we can identify $L^\pm_0$ with the Laplacian acting on tensor powers of the tangent and cotangent bundle as we did in \ref{no:laplacian}. One may use the ellipticity of these operators to prove the finite dimensionality of the joint energy spaces. 

Another approach which avoids analysis would be to perform the program suggested \cite{vafa} and in \cite{boer} for the $G_2$ case. It turns out that in $SVPsin_7$ the currents $\tfrac{1}{8}X$ is a Virasoro vector of central charge $c=1/2$. The algebra has two commuting Virasoro fields: $X$ of central charge $1/2$ and  $T=L - \frac{1}{8}X$ of central charge $23/2$. In our situation we would have four commuting Virasoro vectors $X^\pm, T^\pm$ in $C^\infty(M, \Omega^{ch}_M)$.

The Virasoro algebra at $c=1/2$ has three irreducible unitary representations, the lowest conformal weight being $0, \tfrac{1}{16}$ and $\tfrac{1}{2}$ respectively. That we consider as a list in this order. 

We therefore consider the sheaf (or its global sections) $\Omega^{ch}_M$ as a $Vir_{1/2} \otimes Vir_{23/2}$-module with respect to $X^-, T^-$. It follows from the second equation in \eqref{eq:svspin7} that the vector $G_-$ has conformal weight $1/16$ with respect to $X^-$. The corresponding operator $G_-(z)$ is an intertwining operator between the different $Vir_{1/2}$-modules appearing in the decomposition of $C^\infty(M, \Omega_M^{ch})$. The fusion rules in this situation are similar and simpler than in the $G_2$ case, the operation of tensoring with the module of minmial weight $1/16$ produces a sum of the module to the right plus the module to the left in the above list. It follows in the same way as in the $G_2$ case above that the intertwining operator $G_-(z)$, when restricted to an irreducible $Vir_{1/2}$-submodule of $\Omega_M^{ch}$ will be the sum of two different intertwining operators $G_-(z) = H_-(z) + G_-(z)$, the first one intertwining to the left and the second to the right in the above list of modules. Taking the zero mode of $Q^-(z)$ we obtain an odd endomorphism of $\Omega^{ch}_M$ that squares to zero and that the whole $SVSpin_7$ algebra generated by $X^+$ commutes with it. It follows that $H(\Omega^{ch}_M, Q_0^-)$ is a vertex algebra that contains one copy of $SVSpin_7$ as a subalgebra. 

Conjecturally one would obtain a vertex algebra such that its graded dimension:
\[ \tr_{H(\Omega^{ch}_M, Q^-_0)} q^{L_0^+ - c/24}, \]
coincides with the elliptic genus of $M$, in particular, it should have finite dimensional conformal weight spaces. 

Of course the above topological twist in the $Spin_7$ case would only work if the vertex algebras produced by the conjecture are irreducible, so that the copy of $Vir_{1/2}$ is a unitary vertex operator algebra and $\Omega_M^{ch}$ decomposes as a sum of irreducible modules. 

As in the $G_2$ case we can restrict the action of the endomorphism $Q^-_0$ to the space generated by $\gamma^i$ and the local 1-forms $e^i_-$. The space of $k$ forms $\wedge^k T^*_M$ decomposes under the action of $Spin_7$ as an orthonormal sum of irreducible representations $\Lambda^k_{l}$ of dimension $l$:
\[
\begin{aligned}
\wedge^1 T^*M &= \Lambda^1_8, && \wedge^2 T^*M &=& \Lambda^2_7 \oplus \Lambda^2_{21} \\ 
\wedge^3 T^*M &= \Lambda_8^3 \oplus \Lambda_{48}^3, && \wedge^4 T^*M &=& \Bigl(\Lambda_1^4 \oplus \Lambda_7^4 \oplus \Lambda^4_{27}\Bigr) \oplus \Lambda^4_{35}
\end{aligned}
\] 
and the Hodge $\star$-operation is an isometry $\Lambda^k_l \simeq \Lambda^{8-k}_l$. The summand $\Lambda^4_{35}$ is the $-1$ eigenspace of $\star$ in $\wedge^4 T*M$ while the rest is the $+1$ eigenspace. The form $\psi$ is a basis for the $\Lambda^4_1$ term.  The restriction of the operator $Q^-_0$ to $\wedge^\bullet T^*M$ is identified with the composition of the de Rham differential with the corresponding projections in the complex:
\[ 0 \rightarrow \Lambda^0_1 \rightarrow \Lambda^1_8 \rightarrow \Lambda^2_7 \rightarrow \Lambda^3_8 \rightarrow \Lambda^4_1 \rightarrow 0, \]
which in the case of $Hol(M)=Spin_7$ is a resolution of $\Lambda^0_1$ \cite{joyce}. 
\end{nolabel}
\section{Hamiltonian reduction and mock modular forms} \label{sec:hamiltonian}
\begin{nolabel}\label{no:w-algebras}
It is remarkable that all of the supersymmetric algebras appearing as symmetries of the chiral de Rham complex are obtained as quantum Hamiltonian reductions $W_k(\fg,f)$ of affine Kac-Moody algebras associated to finite dimensional Lie super-algebras $\fg$ and a nilpotent element $f \in \fg$. For an introduction  to these $W$-algebras and their representation theory we refer the reader to \cite{kacwakimoto1} and references therein. 
\end{nolabel}
\begin{nolabel}
Roughly speaking $W_k(\fg, f)$ is constructed as follows. One starts with a simple finite dimensional Lie superalgebra $\fg$ and an even nilpotent element $f \in \fg$ and a non-degenerate invariant bilinear form $(,)$ on $\fg$ normalized so that the highest root $\theta$ satisfies $(\theta, \theta)=2$. Extend the nilpotent element to an $\mathfrak{sl}_2$ triple $(f,h,e)$. The adjoint action of the semisimple element $h$ decomposes
\[ \fg = \bigoplus_{j \in \frac{1}{2} \mathbb{Z}} \fg_j, \qquad \fn_\pm = \bigoplus_{\pm j \geq 1/2} \fg_j,  \]
We normalize $h$ so that $f \in \fg_{-1}$. We construct three different vertex algebras associated to this setup:
\begin{enumerate}
\item $V_k(\fg)$ is the affine Kac-Moody vertex algebra \ref{no:vg} associated to $\fg$. 
\item $F(\fn_- \oplus \fn_+)$ is the free Fermions \ref{no:free-fermions}  associated to the vector space $\fn_- \oplus \fn_+$ with its symmetric non-degenerate form $(,)$. 
\item $F_{ne}(\fg_{1/2})$ is the symplectic Bosons \ref{no:symplectic-bosons} associated to the vector space $\fg_{1/2}$ and its symplectic form $(f,[\cdot,\cdot])$. 
\end{enumerate}
Consider the vertex algebra $C_k(\fg, f) = V_k(\fg) \otimes F(\fn_- \oplus \fn_+) \otimes F_{ne}(\fg_{1/2})$ and let 
\[ Q = \sum_\alpha (-1)^\alpha u_\alpha \varphi^\alpha - \frac{1}{2} \sum_{\alpha,\beta,\gamma} (-1)^{\alpha \beta} c_{\alpha \beta}^\gamma \varphi_\gamma \varphi^\alpha \varphi^\beta  + \sum_\alpha (f, u_\alpha) \varphi^\alpha + \sum_\alpha \varphi^\alpha \Phi_\alpha,\]
Where $\alpha, \beta, \gamma$ index a basis of $\fn_+$, $u_\alpha \in \fg_\alpha \subset V_k(\fg)$, $\varphi^\alpha$ is the same element considered as an element of $\fn_+ \subset F(\fn_+ \oplus \fn_-)$, $\varphi_\alpha$ is its dual element with respect to $(,)$ in $\fn_- \subset F(\fn_+ \oplus \fn_-)$, $\Phi_\alpha$ form a basis for $F_{ne}(\fg_{1/2})$ when the corresponding $u_\alpha \in \fg_{1/2}$ or zero otherwise. $(-1)^\alpha$ denotes $+1$ if $u_\alpha$ is even or $-1$ otherwise. Finally $c_{\alpha\beta}^\gamma$ are the structure constants of $\fn_+$. The vertex algebra $F(\fn_+ \oplus \fn_-)$ is $\mathbb{Z}$-graded by the eigenvalues of 
\[ J = \sum_\alpha \varphi^\alpha \varphi_\alpha, \]
called \emph{charge}. We will consider this grading in $C^\bullet_k(\fg,f)$. $Q$ has charge $1$ hence its zero mode $Q_0: C^{\bullet} \rightarrow C^{\bullet +1}$. 

We have the following
\begin{thm*}[\cite{kacwakimoto1}] In the situation described in above we have
\begin{enumerate}
\item The odd vector $Q$ satisfies $Q(z)\cdot Q(w) \sim 0$. 
\item $H^i(C^\bullet, Q_0) = 0$ if $i \neq 0$. 
\item Define the vertex algebra $W_k(\fg, f) := H^0(C^\bullet, Q_0)$. It is a conformal vertex algebra. For each vector $a \in \fg_{j}$ such that $[a,f]= 0$ there exists a vector in $W_k(\fg,f)$, primary of conformal weight $k+1$. $W_k(\fg,f)$ is strongly generated by such vectors corresponding to a basis of the centralizer $Z_\fg(f)$ of $f$ in $\fg$. 
\end{enumerate}
\end{thm*}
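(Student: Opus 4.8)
The plan is to follow the BRST / quantum Drinfeld--Sokolov strategy, treating the three assertions in order. For (a) I would verify $Q_{(0)}Q=0$ by a direct computation of the self-OPE of $Q$, after organizing it as a sum of three pieces: the Chevalley--Eilenberg (affine BRST) differential $d_{\mathrm{st}}=\sum_\alpha(-1)^\alpha u_\alpha\varphi^\alpha-\tfrac12\sum_{\alpha\beta\gamma}(-1)^{\alpha\beta}c^\gamma_{\alpha\beta}\varphi_\gamma\varphi^\alpha\varphi^\beta$, the character term $\chi=\sum_\alpha(f,u_\alpha)\varphi^\alpha$, and the neutral correction $\rho=\sum_\alpha\varphi^\alpha\Phi_\alpha$. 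The self-bracket $d_{\mathrm{st}(0)}d_{\mathrm{st}}$ vanishes by the Jacobi identity of $\fn_+$ together with invariance of $(,)$ and the fact that $(,)$ pairs $\fg_i$ with $\fg_{-i}$, so that the potentially anomalous double-contraction term $\langle u_\alpha,u_\beta\rangle$ with $u_\alpha,u_\beta\in\fn_+$ is zero; the bracket of $d_{\mathrm{st}}$ with $\chi$ produces a term proportional to $(f,[u_\alpha,u_\beta])\varphi^\alpha\varphi^\beta$, which since $f\in\fg_{-1}$ is supported on $[\fg_{1/2},\fg_{1/2}]$; and this residual anomaly is cancelled by $\tfrac12\rho_{(0)}\rho$ precisely because the symplectic form on $\fg_{1/2}$ was defined to be $(f,[\cdot,\cdot])$, while all other cross-brackets vanish for parity/contraction reasons. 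The $h$-grading keeps every sum finite.

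For (b), the vanishing of $H^i(C^\bullet,Q_0)$ for $i\neq0$, I would place a decreasing filtration on $C_k(\fg,f)$ under which $Q_0=d_-+d_0$, with $d_-$ the ``classical'' part $\sum_\alpha\varphi^\alpha\big((f,u_\alpha)+\Phi_\alpha\big)$ together with the portion of $d_{\mathrm{st}}$ in which each $u_\alpha$ is replaced by its leading (non-affine) symbol, and $d_0$ the remainder; concretely one assigns higher filtration to the affine currents of $V_k(\fg)$ so that $d_-$ is the associated-graded differential. The first task is then to compute the $d_-$-cohomology: $(C_k(\fg,f),d_-)$ is a completion of a tensor product of Koszul-type complexes --- one pairing the $\fg_{\geq 1}$-fermions with currents and with the neutral bosons of $\fg_{1/2}$ --- and such a complex is acyclic away from charge zero, with zeroth cohomology a free-field algebra modelled on the currents together with ``half'' of the $\fg_{1/2}$-bosons; a contracting homotopy can be written on the associated graded. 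Since the resulting $E_1$-term is concentrated in charge zero, the spectral sequence degenerates and $H^i=0$ for $i\neq0$. I expect this to be the main obstacle: choosing the filtration so that $d_-$ is genuinely a resolution, controlling the completions in the infinitely many ``$\gamma$-type'' directions, and verifying degeneration.

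For (c), I would first produce a conformal vector on $C_k(\fg,f)$ of ``shifted Sugawara'' type --- the Sugawara vector of $V_k(\fg)$ corrected by $\tfrac12\partial h$ (so that $\fg_j$ acquires conformal weight $1-j$), plus the natural Virasoro vectors of $F(\fn_-\oplus\fn_+)$ and of $F_{ne}(\fg_{1/2})$ with weights shifted according to the $h$-grading --- and check that it is $Q_0$-closed modulo a total derivative, so that it descends to a conformal vector of $W_k(\fg,f)$, reading off its central charge. Then, for each element $a$ in a chosen basis of $Z_\fg(f)\cap\fg_j$, I would start from the evident lift of $a$ into $C^0$ and correct it by successively lower-filtration terms (the standard Drinfeld--Sokolov normalization) until it becomes $Q_0$-closed; the spectral sequence of (b) identifies its class with a nonzero primary vector, of conformal weight $1-j$ as dictated by the grading element $h$. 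Finally, comparing the bigraded character of $W_k(\fg,f)$ --- which by (b) equals that of the $d_-$-cohomology --- with the character of the vertex algebra freely and strongly generated by these fields shows that they strongly generate $W_k(\fg,f)$ with no relations, i.e.\ a PBW-type theorem, completing the proof.
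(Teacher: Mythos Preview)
The paper does not prove this theorem at all: it is quoted with a citation to \cite{kacwakimoto1} and then used as a black box in the discussion of Hamiltonian reduction and free-field realizations. There is therefore nothing in the paper to compare your proposal against.

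That said, your outline is a faithful sketch of the argument in the cited reference (and its antecedents in Feigin--Frenkel and de Boer--Tjin). The three-piece decomposition of $Q$ and the cancellation mechanism you describe for (a) is exactly how the computation goes; the filtration and spectral-sequence argument you propose for (b), with the associated graded reducing to a Koszul-type resolution, is the standard route; and the shifted Sugawara construction together with the character/PBW comparison is the correct way to obtain (c). Two small remarks: first, the conformal weight ``$k+1$'' in the statement as printed is evidently a typo (the level $k$ has no business there) --- your ``$1-j$'' for $a\in\fg_j$ is the correct value, and you should not try to reproduce the stated weight. Second, in your sketch of (b) the phrase ``together with the portion of $d_{\mathrm{st}}$ in which each $u_\alpha$ is replaced by its leading symbol'' is a little loose: in the actual argument one arranges the filtration so that the leading differential pairs each ghost $\varphi^\alpha$ against a single partner (a current mode, or $\Phi_\alpha$, or the scalar $(f,u_\alpha)$), and the structure-constant piece of $d_{\mathrm{st}}$ drops to subleading order; getting this bookkeeping right is indeed, as you anticipate, the main technical point.
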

\end{nolabel}
\begin{nolabel}\label{no:free-field-realization}
Projecting the generating vectors of $W_k(\fg, f)$ to $V(\fg_0) \otimes F_{ne}(\fg_{1/2})$ gives a \emph{free-field} realization of the $W_k(\fg,f)$ algebra. Of course in order to obtain a true free-field realization one needs to consider a free field realization of $V(\fg_0)$, for example using the Wakimoto realization.
\end{nolabel}
\begin{nolabel}\label{no:examples-of-nilpotents}
When the nilpotent $f$ corresponds to the minimal root $-\theta$, the corresponding grading of $\fg$ is simply 
\[ \fg = k \cdot f \oplus \fg_{-1/2} \oplus \fg_0 \oplus \fg_{1/2} \oplus k \cdot e.\]
The vertex algebra $W_k(\fg, f)$ is generated by vectors of conformal weight $1$, $3/2$ and the Virasoro vector $L$ corresponding to $f$.

The other extreme is when the nilpotent element $f$ is the principal nilpotent $f = \sum e_{-\alpha_i}$ given as a sum of root vectors for the set of even primitive roots of $\fg$. In this case the corresponding $W$ algebra is called principal and was defined and studied in detail (in the non-super case) by Feigin and Frenkel in \cite{feigin-frenkel}. In this case $\fg_{1/2} = 0$, the grading of $\fg$ is integral and the free field realization is in $V(\fh)$, the algebra of Free Bosons based on the Cartan subalgebra $\fh \subset \fg$. 

In the super-Lie algebra case there is another interesting example which corresponds to the \emph{super-principal} nilpotent element $f$. This is the case when $\fg$ admits a set of simple roots $\alpha_i$ which are all odd, and considering $F = \sum e_{-\alpha_i}$, $E = \sum_{\alpha_i}$, $f = [F,F]$, $e = [E,E]$ and $h = [e,f]$ the Lie superalgebra $(f,F,h,E,e)$ is isomorphic to $\mathfrak{osp}(2|1)$. In this case the algebra $\fg_0 = \fh$ is the Cartan subalgebra of $\fg$ and $V(\fh) \otimes F_{ne}(\fg_{1/2})$ is a Boson-Fermion system as in \ref{no:boson-fermion-vs-ghosts}. In these cases the corresponding $W_{k}(\fg,f)$ algebra is supersymmetric and inherits the $N=1$ structure from the Boson-Fermion system.  
\end{nolabel}
\begin{nolabel}\label{no:list}
All of the supersymmetry algebras of the chiral de Rham complex discussed in the previous section are $W$ algebras. Most of them are superprincipal $W$-algebras.  They correspond to the entries in the following table:
\begin{center}
\renewcommand{\arraystretch}{1.5}
\begin{tabular}{c|c|c|c}
$\fg$ & $W(\fg,f)$ & $k$ & $Hol(M)$  \\ 
\hline
$\mathfrak{sl}(2|1)$ & $N=2$ & $\frac{-1-n}{2}$ & $SU(n)$ \\ 
$\mathfrak{psl}(2|2)$ & $N=4$ & $-1 -n$ & $SP(n)$\\
$\mathfrak{osp}(3|2)$ & $SVSpin_7$ & $\frac{1}{3}$ & $Spin(7)$ \\ 
$\mathfrak{osp}(4|2)$ & $SVG_2$ & $\frac{1}{3}$ &  $G_2$
\end{tabular}
\end{center}
In the $\mathfrak{sl}(2|1)$ case, namely $N=2$ supersymmetry, the superprincipal nilpotent agrees with the minimal nilpotent. The $N=4$ case it corresponds to the minimal nilpotent, the remaining cases are superprincipal. In the case of the minimal nilpotent one has a free-field realization from \cite{kacwakimoto1}, in the $Spin_7$ and $G_2$ cases free field realizations of the corresponding algebras have been computed in \cite{lazaro2, lazaro3}. 

It is natural to ask
\begin{ques*}
Can one construct the supersymmetries of the chiral de Rham complex directly by Hamiltonian reduction, namely, is there a morphism from $C^\bullet(\fg,f)$ to either $\Omega_M^{ch}$ or perhaps a resolution, inducing the embeddings described in the previous section?
\end{ques*}
An answer to this question would provide further relations between the representation theory of $\mathfrak{osp}(4|2)$ ( resp. $\mathfrak{osp}(3|2)$) at level $k=1/3$ and the geometry of $G_2$ (resp. $Spin_7$) manifolds. 
\end{nolabel}
\begin{nolabel}\label{no:characters}
In the minimal nilpotent cases the representation theory has been largely studied. The characters of unitary representations and their modularity properties are well known \cite{boucher-friedan-kent, eguchi-taormina1}. Recently the representation theory of these algebras has caught attention in connection with the Quantum Hamiltonian reduction of super Lie algebras described above \cite{kacwakimotomock,kacwakimotomock2, kacwakimoto1, kacwakimoto-superconformal}. 
Characters of (integrable) irreducible representations of affine Kac-Moody algebras form a vector valued modular form under the group $SL(2, \mathbb{Z})$. This is no longer the case for Lie superalgebras. In the supercase, in many studied situations, the characters form a vector valued \emph{mock-modular} form \cite{zwegers}. This has been known for many years in the case of $\mathfrak{sl}(2|1)$ \cite{kacwakimotoappel} and has been extended recently and systematically to different superalgebras by obtaining complete descriptions of characters of irreducible modules in the finite dimensional case \cite{cheng-duality} and in the affine case \cite{kac-gorelik1, kac-gorelik2, kacwakimotomock, kacwakimotomock2}. This was used to show that the characters of the irreducible (and more generally admissible) representations of the corresponding $W$ algebras provide examples of vector valued Mock modular forms. 

The works mentioned above concentrate on the minimal nilpotent case and this explains the mock modular properties of the characters of $N=2$ and $N=4$ modules. In the super-principal nilpotent case the list of characters has not been explicitly produced, however the techniques and the machinery of \cite{kacwakimoto-superconformal} is directly applicable. One would expect to confirm the list of characters of \cite{benjamin-harrison} in the $Spin_7$ case by studying the representation theory of $\mathfrak{osp}(3|2)$ at level $k=1/3$ and obtain new characters for the $G_2$ case by studying the representation theory of $\mathfrak{osp}(4|2)$ at level $k=1/3$. 
\end{nolabel}
\begin{nolabel}\label{no:moonshine}
The possible connection of Mathieu's group $M_{24}$ with the geometry of $K_3$ in the context of moonshine phenomena was discovered in \cite{eguchi}. In that work the authors expand the elliptic genus of a $K3$ surface as a linear combination of characters of the $N=4$ algebra. The elliptic genus of a Calabi-Yau manifold $M$ is the character of the sheaf cohomology of the holomorphic chiral de Rham complex $\Omega^{ch,hol}_M$ of $M$. We have seen in \eqref{eq:ell3}  that this can also be computed using the smooth chiral de Rham complex $\Omega_M^{ch}$, the resulting expansion is \cite{eguchi-hikami}
\begin{equation}\label{eq:ell5}
y^{- \frac{3}{4} \dim_{\mathbb{R}}M} \tr_{H \left( C^\infty(M, \Omega_M^{ch}), Q_0^- \right)} q^{T_0^+} y^{J^+_0} = 20 \ch_{\frac{1}{4},0}(\tau,\alpha) - 2 \ch_{\frac{1}{4},0} (\tau,\alpha) + \sum_{n \geq 0} A_n \ch_{n+ \frac{1}{4}, \frac{1}{2}}(\tau,\alpha), 
\end{equation}
Where $\ch_{h,l}(\tau,\alpha)$ denotes the character of the Ramond twisted irreducible representation of the $N=4$ algebra of central charge $c=6$, obtained as Hamiltonian reduction of $\mathfrak{psl}(2|2)$ at level $k=-2$, with highest weight vector of conformal weight $h$ and isospin $l$. The numbers $A_n$ are positive integers for all values of $n$ and in fact are dimensions of representations of $M_{24}$ \cite{gannon}. The generating series for these multiplicity spaces is a mock modular form. Indeed, the LHS of \eqref{eq:ell5} is a Jacobi form of weight $0$ and index $1$ while the characters $\ch_{h,l}(\tau,\alpha)$ are mock modular. 
\end{nolabel}

\begin{nolabel}\label{no:other-hol}
This prompts the question of whether one can associate a mock modular form canonically to other manifolds with special holonomy. In the case of manifolds with holonomy $Spin_7$ a program was  initiated in \cite{benjamin-harrison}. From what it was discussed above it would be interesting to prove:
\begin{ques*}
Is the elliptic genus of a $Spin_7$ manifold $M$ the graded dimension of the topological twist $H\left( C^{\infty}(M, \Omega^{ch}_M), Q^-_{0} \right)$ described in \ref{no:spin-7-symmetry}?
\end{ques*}
\begin{ques*}Are the conjectural characters of \cite{benjamin-harrison} obtained by quantum Hamiltonian reduction of characters of $\mathfrak{osp}(3|2)$ at level $k=1/3$ as described in \cite{kacwakimoto-superconformal}?
\end{ques*}
\begin{ques*}Is the decomposition of $\mathcal{Ell}_M(\tau, \alpha)$ in characters of $SVSpin_7$ coming from the decomposition of $H\left( C^{\infty}(M, \Omega^{ch}_M), Q^-_{0} \right)$ into irreducible representations of the $SVSpin_7$ from Conjecture \ref{no:spin-7-symmetry}?
\end{ques*}
At this time, the author does not know of any method that is not a lengthy direct computation in order to prove Conjecture \ref{no:spin-7-symmetry}. Given that conjecture, the topological twist described in \ref{no:spin-7-symmetry} and the machinery of \cite{kacwakimoto-superconformal} should be immediately applicable. 
\end{nolabel}
\begin{nolabel}\label{no:G2-final}
As we have already discussed the $G_2$ case is more subtle since we do not have a well defined elliptic genus. Rather in this case if the answer to Question \ref{ques:simpleg2} is affirmative we would consider \eqref{eq:g2formal} as a definition. By Rodriguez theorem this algebra will decompose as irreducible representations of $SVG_2$. Since one would expect the characters of $SVG_2$ to be mock modular, being obtained by Hamiltonian reduction of $\mathfrak{osp}(4|2)$ at level $k=1/3$ and the character \eqref{eq:g2formal} to be modular, we expect to attach the generating series of multiplicity spaces, a mock modular form, to any $G_2$ manifold. 
\end{nolabel}

\begin{nolabel}\label{no:conclude}
We conclude by mentioning other situation in which $W_k(\fg, f)$ algebras, at the minimal nilpotent make a striking appearance in connection to holonomy groups. Let $V$ be a vertex super algebras generated in conformal weights $1$, $3/2$ and a Virasoro vector in conformal weight $2$. The space $\fh$ of conformal weight $1$ is a Lie algebra with an invariant bilinear form $(,)$ and the space $M = V_{3/2}$ of conformal weight $3/2$ is a $\fh$-module. If one classifies the simple such vertex algebras one finds \cite{fradkin} three infinite classical series a continuous family and two exceptional cases. The pairs $\fh, M$ that appear are in correspondence with the (complex analogs of ) pairs $(hol(M), T^*_xM)$ of of holonomy Lie algebras and their irreducible representations on a cotangent fiber $T^*_xM$ for $x \in M$.  In fact, by requiring that $V$ has a quasi-classical limit one finds \cite{de-sole-thesis} that the pairs are such that the group $H \subset SO(M)$ associated to $\fh$ has an open orbit in the unit quadric $\{(m,m) = 1\} \subset M$, obtaining immediately the complex analog of Berger's list\footnote{Recall that Berger's classification of possible holonomy groups is also obtained by reducing the problem to groups having an open orbit on the sphere.}. All of these algebras are also quantum Hamiltonian reductions of Lie superalgebras $\fg$, but now at the minimal nilpotent element. Specializing to the same four holonomy groups that we treated in this article this list looks as follows (see also \cite{kacwakimoto1}, recall we are using complex Lie algebras)

\begin{center}
\renewcommand{\arraystretch}{1.5}
\begin{tabular}{c|c|c}
$\fg$ & $\fh$ & $M$  \\ 
\hline
$\mathfrak{sl}(2|m)$ & $\mathfrak{gl}_m$ & $\mathbb{C}^m \oplus \mathbb{C}^{m*}$ \\ 
$\mathfrak{osp}(4|m)$ & $\mathfrak{sl}_2 \oplus \mathfrak{sp_m}$ & $\mathbb{C}^2 \otimes \mathbb{C}^m$ \\ 
$F(4)$ & $Spin_7$ & $8-dim$ \\
$G(3)$ & $G_2$ & $7-dim$ \\ 
\end{tabular}
\end{center}
It is natural to ask:
\begin{ques*}
What is the connection betweeen these $W$ algebras at the minimal nilpotent and those in \ref{no:list}?
\end{ques*}
\end{nolabel}

\bibliographystyle{plain}
\def\cprime{$'$}

\end{document}